\newcommand\nocell[1]{\multicolumn{#1}{c|}{}}
\renewcommand*\env@matrix[1][\arraystretch]{%
  \edef\arraystretch{#1}%
  \hskip -\arraycolsep
  \let\@ifnextchar\new@ifnextchar
  \array{*\c@MaxMatrixCols c}}
\numberwithin{equation}{subsection}
\newcommand\Tstrut{\rule{0pt}{3.5ex}}       
\newcommand\Bstrut{\rule[-1.4ex]{0pt}{0pt}} 
\newcommand\TBstrut{\Tstrut\Bstrut}         
\newcommand\TTstrut{\rule{0pt}{6ex}}       
\newcommand\ptwiddle[1]{\mathord{\mathop{#1}\limits^{\scriptscriptstyle(\sim)}}}
\newcommand{\subjclass}[2][2020]{%
  \let\@oldtitle\@title%
  \gdef\@title{\@oldtitle\footnotetext{#1 \textsc{MSC:} #2.}}%
}
\newcommand{\keywords}[1]{%
  \let\@@oldtitle\@title%
  \gdef\@title{\@@oldtitle\footnotetext{\emph{Keywords:} #1.}}%
}
\newcommand{\Address}{{
  \bigskip
  \footnotesize
  \textit{E-mail address}: \texttt{benedetta.piroddi@unimi.it} \textit{,\ also} \texttt{piroddi.benedetta@gmail.com}
\par\nopagebreak ORCID: 0000-0002-5327-2857
}}
\newtheorem{theorem}{Theorem}[subsection]
\theoremstyle{remark}
\newtheorem{definition}[theorem]{Definition}
\theoremstyle{theorem}
\newtheorem{proposition}[theorem]{Proposition}
\newtheorem{lemma}[theorem]{Lemma}
\newtheorem{corollary}[theorem]{Corollary}
\theoremstyle{remark}
\newtheorem{remark}[theorem]{Remark}
\theoremstyle{remark}
\title{Symplectic actions of groups of order 4 on K3$^{[2]}$-type manifolds, and standard involutions on Nikulin-type orbifolds}
\date{}
\author{Benedetta Piroddi}
\subjclass{14J42, 14J50, 14J10}
\keywords{Nikulin orbifolds, symplectic involutions, hyperk\"ahler manifolds, projective classification}
\begin{document}
\maketitle
\Address
\begin{abstract}
Given a K3$^{[2]}$-type manifold $X$ with a symplectic involution $i$, the quotient $X/i$ admits a Nikulin orbifold $Y$ as terminalization. We study the symplectic action of a group $G$ of order 4 on $X$, such that $i\in G$, and the natural involution induced on $Y$ (the two groups give two different results). We give a lattice-theoretic classification of $X$ and $Y$ in the projective case, and give some explicit examples of models of $X$. We also give lattice-theoretic criteria that a Nikulin-type orbifold $N$ has to satisfy to admit a symplectic involution that deforms to an induced one.
\end{abstract}


\section*{Aknowledgements}
This paper presents part of the results of the author's PhD thesis. She thanks the PhD program of Università degli Studi di Milano for the support: she is especially grateful to her advisor Alice Garbagnati, and to Bert van Geemen. She thanks Stevell Muller and Giovanni Bocchi for their invaluable help in accessing and using the database \cite{BHdata}.\\
She also thanks the anonymous referee of a previous version of this paper, for their careful reading, their corrections (especially those pertaining to Section \ref{sec:Nik}) and their earnest opinions: she thinks the implementation of their suggestions results in a better written paper overall.\\
The author is a member of INdAM GNSAGA.

\section*{Introduction} 
The Torelli theorem for hyperk\"ahler manifolds \cite{Markman} allows to deduce geometrical information about an hyperk\"ahler manifold from the interplay between the lattice and Hodge structure on its second integral cohomology group. Aiming to extend to the setting of (singular) symplectic varieties the framework of hyperk\"ahler geometry, cohomological information is especially welcome. Indeed, for some classes of symplectic varieties, Torelli-type theorems hold \cite{Menet3}, \cite{MR1}, \cite{BakkerLehn2021}; however, there are very few deformations type of symplectic varieties of which we know the lattice structure of the second integral cohomology group: one of them is the class of Nikulin-type orbifolds \cite{Menet2}. \\
A Nikulin orbifold $Y$ is obtained as terminalization of the quotient of a K3$^{[2]}$-type manifold $X$ by a symplectic involution $i$: if a bigger group $G$ in which $\langle i\rangle$ is normal acts on $X$, then $Y$ admits a residual action of $G/\langle i\rangle$
. If $G$ has order 4 the pair $(X,G)$ can be always deformed to a natural pair $(S^{[2]}, G)$, where $S$ is a K3 surface with a symplectic action of $G$ \cite{HM}; moreover, the symplectic action of $G$ on $H^2(S,\mathbb Z)$ is unique \cite[Thm. 4.7]{Nikulin2}, and completely described (see \cite{P} for $G=\mathbb Z/4\mathbb Z$, \cite{P2} for $G=(\mathbb Z/2\mathbb Z)^2$).\\
Our main result in this paper is the cohomological description of the two involutions induced on $Y$ by the symplectic action on $X$ of groups of order 4. To our knowledge, the only other automorphism ever described on $H^2(Y,\mathbb Z)$ at the time of writing is a non-standard symplectic involution \cite{MR2}. 

Proceeding as in \cite{CGKK}, we firstly describe in a lattice theoretic way the general member of families of projective K3$^{[2]}$-type manifolds $X$ with a symplectic action of $G$, (see Theorems \ref{familieshyperkahlerZ4} and \ref{familieshyperkahlerKlein}): these are classified by the pair $(NS(X),T(X))$ of the Néron-Severi and transcendental lattices of the general member.
For each group $G$, of two of these families we can actually give an explicit projective model of the general member: one as Fano manifold over a cubic fourfold, one as Hilbert scheme of two points of a quartic surface with a mixed (partially non-symplectic) action of $G$.

We then turn our attention to Nikulin orbifolds $Y$ and their deformation class. The two groups of order four induce two very different involutions $\iota_1,\iota_2$ on $Y$: indeed, we can see from the action on $X$ that the one induced by $\mathbb Z/4\mathbb Z$ fixes only points on $Y$, while the fixed locus of the one induced by $(\mathbb Z/2\mathbb Z)^2$ has codimension 2. We describe the action of these involutions on $H^2(Y,\mathbb Z)$ and deduce some information about the cohomology of the terminalization $W$ of the quotient $Y/\langle\iota_k\rangle$, extending to the case of K3$^{[2]}$-type manifolds the quotient maps we introduced for K3 surfaces in \cite{P}, \cite{P2}. After the recent proof of maximality of the monodromy group for Nikulin-type orbifolds \cite{BMM}, it follows that induced involutions can be deformed together with $Y$ to any Nikulin-type orbifold satisfying the appropriate lattice-theoretic conditions.

We conclude this paper with the lattice-theoretic classification of projective Nikulin orbifolds $Y$ that are terminalization of $X/i$, where $X$ is a general projective K3$^{[2]}$-type manifold with a symplectic action of a group of order 4 $G$, and $i\in G$ is an element of order 2: thus, we describe the correspondence between the moduli space of $X$ and $Y$ (this is a specialization of the correspondence in \cite{CGKK}, where $X$ is a general projective K3$^{[2]}$-type manifold with a symplectic involution). As standard involutions on Nikulin-type orbifolds commute with the non-standard involution $\kappa$ described in \cite{MR2}, we classify also projective Nikulin-type orbifolds that admit a mixed action of $(\mathbb Z/2\mathbb Z)^2$, where one of the generators is standard, and the other is $\kappa$. 

\section{Projective families of K3$^{[2]}$-type manifolds with a symplectic action of a group of order 4}

Let $(X,\varphi)$ be a marked hyperk\"ahler manifold, where $\varphi: (H^2(X,\mathbb Z),q_X)\simeq\Lambda$, $q_X$ being the BBF-form of $X$. Let a finite group $G$ act symplectically on $X$, and consider the invariant lattice $\Lambda^G\subset\Lambda$ and its orthogonal complement, the co-invariant lattice $\Omega_G$. \\
In general, $G$ can admit numerous different symplectic actions on hyperk\"ahler manifolds of the deformation type of $X$; its action in cohomology is however determined by the isometry class of $\Omega_G$, and the (primitive) embedding $\Omega_G\hookrightarrow\Lambda$. \\
Since $G$ preserves the symplectic form of $X$, its transcendental lattice $T(X)$ is invariant, and $\Omega_G$ is primitively embedded in $NS(X)=H^{1,1}(X)\cap H^2(X,\mathbb Z)$: since $\Omega_G$ is negative-definite, the general hyperk\"ahler manifold $X$ admitting a symplectic action of $G$ is not projective.

\begin{definition}[see \protect{\cite[\S 1]{Dolgachev}}]\label{def:latticepolarized}
Let $M$ be a negative-definite or hyperbolic even lattice admitting an embedding $j:M\hookrightarrow \Lambda$. A marked hyperk\"ahler manifold $(X,\varphi)$ is \emph{$(M,j)$-polarized} if $j$ factorizes as $M\hookrightarrow NS(X)\hookrightarrow H^2(X,\mathbb Z)\simeq\Lambda$, where both embeddings are primitive, and the last isometry is $\varphi$. We'll write \emph{$M$-polarized} if $j$ is unique up to isometries of $\Lambda$.
\end{definition}

\begin{remark}\label{moduli_space_dim}
The moduli space of $(M,j)$-polarized marked hyperk\"ahler manifolds deformation equivalent to $(X,\varphi)$ has dimension $rk(\Lambda)-2-rk(M)$. 
\end{remark}

\begin{definition}\label{def:proj_families}
Let $G$ be a finite group acting symplectically, and fix the associated embedding $\psi: \Omega_G\hookrightarrow\Lambda$. Let $(S,j)$ be a pair such that $S$ is an even hyperbolic lattice of rank rk$\Omega_G+1$ and $\psi$ factorizes in primitive embeddings $j\circ\overline\psi:\Omega_G\hookrightarrow S\hookrightarrow\Lambda$. Denote each family of marked $(S,j)$-polarized hyperk\"ahler manifolds $(X,\varphi)$ a \emph{projective family (with an action of $G$)}.
\end{definition}

\begin{proposition} \label{prop:proj_families} 
A given symplectic action of a finite group $G$ admits at most countably many projective families. A projective hyperk\"ahler manifold $X$ admits the chosen action of $G$ if and only if it belongs to one such projective family. 
\end{proposition}
\begin{proof}
If $X$ is projective and admits a given symplectic action of $G$, then $NS(X)$ contains primitively both the negative definite lattice $\Omega_G$ and an ample class $L$ of square $2d$: therefore, $S=\Omega_G\oplus\langle 2d\rangle$ or an overlattice of finite index. For each suitable $d>0$ there exists a finite number of such overlattices \cite[Prop. 1.4.1]{Nikulin1}, and for each choice of $S$ there is a finite number of choices of embeddings $j:S\hookrightarrow\Lambda$  \cite[Prop. 1.15.1]{Nikulin1}.
Conversely, if $X$ belongs to a projective family, then it is projective \cite[Thm. 2]{Huybrechts}, and it admits a symplectic action of $G$ by the Torelli theorem for hyperk\"ahler manifolds \cite[Thm. 1.3]{Markman}.
\end{proof}

\begin{remark}\textbf{Overlattice notation.}
Consider the lattice $N\oplus\langle k\rangle$, where $N$ is a negative definite
even lattice and $\langle k\rangle$ is an even positive definite lattice with intersection matrix $[k]$.
Denote $(N\oplus\langle k\rangle)'$ and $(N\oplus\langle k\rangle)^\star$ any cyclic overlattices of $N\oplus\langle k\rangle$ obtained by adding to the list of generators a class of the form $(\nu + \kappa)/2, (\nu + \kappa)/4$ respectively, with $\nu\in N$ and $\kappa$ the generator of $\langle k\rangle$. When two overlattices of index 2 of  $N\oplus\langle k\rangle$ as above are not abstractly isomorphic, they will be denoted as $(N\oplus\langle k\rangle)'^{(i)},\ i = 1, 2$.
\end{remark}

\textbf{Notation:} Let $X$ be a K3$^{[2]}$-type manifold. It holds \[H^2(X,\mathbb Z)\simeq \Lambda_{\mathrm{K3}^{[2]}}=E_8^{\oplus 2}\oplus U^{\oplus 3}\oplus\langle -2\rangle\simeq\Lambda_{\mathrm{K3}}\oplus\mathbb Z\mu,\]
where $\mu$ is a class of square $-2$ and divisibility 2: notice that this class admits a unique primitive embedding in $\Lambda_{\mathrm{K3}^{[2]}}$ up to isometries, its orthogonal complement always being $E_8^{\oplus 2}\oplus U^{\oplus 3}\simeq\Lambda_{\mathrm{K3}}$. 
\begin{remark}
If $X=\Sigma^{[2]}$ is the Hilbert scheme of two points on a K3 surface $\Sigma$, then $\mu=\Delta/2$, where $\Delta$ is the exceptional divisor of the blow-up $\Sigma^{[2]}\rightarrow Sym^2(\Sigma)$.
\end{remark}

\begin{remark}\label{G_action_standard}
Finite symplectic actions on K3$^{[2]}$-type manifolds are classified in \cite{HM}. In particular, if $G$ has order 4 the action of $G$ on $\Lambda_{\mathrm{K3}^{[2]}}$ is unique (depeding only on $G$, up to isometries of $\Lambda_{\mathrm{K3}^{[2]}}$): the co-invariant lattice $\Omega_G$ is the same as the one for the action of $G$ on a K3 surface, and it is embedded in  $\Lambda_{\mathrm{K3}^{[2]}}$ orthogonally to $\mu$, and uniquely up to isometries of $\mu^\perp$.
\end{remark}

Denote $\Omega_4$ and $\Omega_{2,2}$  the co-invariant lattices for the action of $G=\mathbb Z/4\mathbb Z$ and $(\mathbb Z/2\mathbb Z)^2$ respectively, and fix the embedding $\Omega_G\hookrightarrow\Lambda_{\mathrm{K3}}\hookrightarrow \Lambda_{\mathrm{K3}}\oplus\mathbb Z\mu\simeq\Lambda_{\mathrm{K3}^{[2]}}$. The lattices $S$ satisfying Definition \ref{def:proj_families} are classified in \cite[Thm. 5.1.4]{P}, \cite[Thm. 3.1.3]{P2}, and each embedding $j:S\hookrightarrow\Lambda$ is determined by the embedding in $\Omega_G^\perp$ of the class of positive square that generates $\Omega_G^{\perp_{S}}$. We're going to distinguish two cases: 
\begin{itemize}[--]
\item the class that generates $\Omega_G^{\perp_{S}}$ is of the form $L=L_\Sigma\in\mu^\perp$: there exists a general projective K3 surface $\Sigma$ admitting a symplectic action of $G$, such that $S\simeq NS(\Sigma)$, $T\simeq T(\Sigma)\oplus\langle -2\rangle$; the classes $L_\Sigma$ are described (up to isometries of $\Lambda_{\mathrm{K3}}$) in \cite[Ex. 5.1.6]{P} for $G=\mathbb Z/4\mathbb Z$, in \cite[Table 2]{P2} for $G=(\mathbb Z/2\mathbb Z)^2$.
\item the class that generates $\Omega_G^{\perp_{S}}$ is of the form
$M=\lambda+n\mu$, where $\lambda\in\Lambda_{\mathrm{K3}}$ is a class of positive square, and $n>0$. 
\end{itemize}

\subsection[The case $G=\mathbb Z/4\mathbb Z$]{Families of projective K3$^{[2]}$-type manifolds with a symplectic action of $\mathbb Z/4\mathbb Z$}\label{K3[2]withZ4}

Let $G=\mathbb Z/4\mathbb Z=\langle\tau\rangle$. By Remark \ref{G_action_standard} it holds
\[\Lambda_{\mathrm{K3}^{[2]}}^{\tau}=\Lambda_{\mathrm{K3}}^{\tau}\oplus\langle -2\rangle, \]
where we fix the following notation for the generators:
\begin{equation}\label{marking_invariant_Z4}
\xymatrix@C=0.001pc{
{\begin{matrix}\Lambda_{\mathrm{K3}}^{\tau}\ \simeq \\ \ \end{matrix}} &{\begin{matrix}U \\ s_1, s_2\end{matrix}} &{\begin{matrix}\oplus\\ \ \end{matrix}} &{\begin{matrix}\langle -2\rangle^{\oplus 2} \\ w_1, w_2\end{matrix}} &{\begin{matrix}\oplus\\ \ \end{matrix}} &{\begin{matrix}U(4)^{\oplus 2} \\ w_3,\dots,  w_6\end{matrix}}}
\end{equation}

\begin{theorem}\label{familieshyperkahlerZ4}
The projective families of K3$^{[2]}$-type manifolds $X$ with a symplectic action of $\mathbb Z/4\mathbb Z$ correspond to the pairs $(S,T)$ appearing in the following table, where the class $L$ indicated is the generator of $\langle 2d\rangle=\Omega_4^{\perp_{S}}$, and $T=S^{\perp}$: the classes $L_0, L_{i,j}$ are defined in \cite[Ex. 5.1.6]{P}, while the $M_i$ and $\tilde M_i$ are defined in the proof below.
{\footnotesize
\begin{longtable}{|c|c|c|c|c|}
\cline{2-5}
\nocell{1} &{$S$\Tstrut} &{$T$ \Tstrut} & $L$ &$L^2$\\ [6pt]
\hline
\multirow{4}{*}{$d=_4 1$\Tstrut}	&\multirow{3}{*}{$\Omega_4\oplus\langle 2d\rangle$\TTstrut} & $U(4)^{\oplus 2}\oplus\langle-2\rangle^{\oplus 3}\oplus\langle 2d\rangle$\Tstrut &$L_0(d)$ &$2d$\\[6pt]
\ &\ & $U(4)^{\oplus 2}\oplus K_m$ &$M_1(m)$ &$2(4m-3)$ \\[6pt]
\ &\ & ${U(4)\oplus D_m}$ &$\tilde M_1(m)$ &$2(4m+1)$ \\[6pt]
\hline
\multirow{5}{*}{$d=_4 2$\Tstrut} & \multirow{2}{*}{$\Omega_4\oplus\langle 2d\rangle$\Tstrut} &$U(4)^{\oplus 2}\oplus\langle-2\rangle^{\oplus 3}\oplus\langle 2d\rangle$\Tstrut &$L_0(d)$ &$2d$\\
\ &\ &{$U(4)^{\oplus 2}\oplus\langle-2\rangle\oplus H_m$\Tstrut} &$M_2(m)$ &$2(4m-2)$\\[6pt]
\cdashline{2-5}
\ &$(\Omega_4\oplus\langle 2d\rangle)'^{(1)}$\Tstrut &\multirow{2}{*}{$U(4)^{\oplus 2}\oplus\langle-2\rangle\oplus H_m$\Tstrut} &$L_{2,2}^{(1)}(m)$ &$2(4m+2)$ 
\\ 
\ & $(\Omega_4\oplus\langle 2d\rangle)'^{(2)}$\Tstrut  &\  &$L_{2,2}^{(2)}(m)$ &$2(4m-2)$\\ [6pt]
\hline
\end{longtable}\newpage
\begin{longtable}{|c|c|c|c|c|}
\cline{2-5}
\nocell{1} &{$S$\Tstrut} &{$T$ \Tstrut} & $L$ &$L^2$\\ [6pt]
\hline
\multirow{3}{*}{$d=_4 3$\Tstrut}&\multirow{2}{*}{$\Omega_4\oplus\langle 2d\rangle$\Tstrut} & $U(4)^{\oplus 2}\oplus\langle-2\rangle^{\oplus 3}\oplus\langle 2d\rangle$\Tstrut &$L_0(d)$ &$2d$\\[6pt]
\ &\ &$U(4)^{\oplus 2}\oplus\langle-2\rangle^{\oplus 2}\oplus G_m$ &$M_3(m)$ &$2(4m-1)$\\[6pt]
\cdashline{2-5}
\ &$(\Omega_4\oplus\langle 2d\rangle)'$\Tstrut &$U(4)^{\oplus 2}\oplus\langle-2\rangle^{\oplus 2}\oplus G_m$\Tstrut &$L_{2,3}(m)$ &$2(4m+3)$\\[6pt]
\hline
\multirow{6}{*}{$d=_4 0$\Tstrut} & $\Omega_4\oplus\langle 2d\rangle$\Tstrut &$U(4)^{\oplus 2}\oplus\langle-2\rangle^{\oplus 3}\oplus\langle 2d\rangle$\Tstrut  &$L_0(d)$ &$2d$   \\[6pt]
\cdashline{2-5}
\ &\multirow{2}{*}{$(\Omega_4\oplus\langle 2d\rangle)'$\Tstrut} & $U(4)\oplus\langle-2\rangle^{\oplus 3}\oplus F_m$\Tstrut  &$L_{2,0}(m)$ &$2(4(m-1))$\\[6pt]
\ &\ &$U\oplus U(4)\oplus\langle-2\rangle^{\oplus 2}\oplus E_m$\Tstrut &$M_4(m)$ &$2(4(m-1))$\\[6pt]
\cdashline{2-5}
\ & $(\Omega_4\oplus\langle 2d\rangle)^{\star}$\Tstrut  &$U\oplus U(4)\oplus\langle-2\rangle^{\oplus 2}\oplus E_m$\TBstrut  &$L_{4,j}(h)$ &$2(4(m-1))$, see Table \ref{relation_mjh}\\ [6pt]
\hline
\end{longtable}

\begin{center}\begin{table}[h!]
\caption{Relation between 
$m,j,h$}\label{relation_mjh}
\centering
\begin{tabular}{c|c c c c}
$m$ (mod 4) & 0 & 1 & 2 & 3 \\
\hline
$j$ & 12 & 0 & 4 & 8\\
$h$ & $(m-4)/4$ &$(m+3)/4$  &$(m-2)/4$ &$(m+13)/4$
\end{tabular}\end{table}\end{center}
\begin{gather*}
G_m=\left[\begin{array}{r r}-2m &1\\ 1&-2\end{array}\right] \quad
H_m=\left[\begin{array}{r r r}-2m &1 &1\\ 1&-2 &0\\1 & 0 & -2\end{array}\right]\quad
K_m=\left[\begin{array}{r r r r}-2m &1 &1 &0\\ 1&-2 &0 &0\\1 & 0 & -2 &2\\0 & 0 & 2 & -4\end{array}\right]\\
D_m=\left[\begin{array}{r r r r r r} -2m &1 &1 &1 &2 &2\\
     1 &-2 &0 &0 &0 &0\\
     1 &0 &-2 &0 &0 &0\\
     1 &0 &0 &-2 &0 &0\\
     2 &0 &0 &0 &0 &4\\
     2 &0 &0 &0 &4 &0\end{array}\right]\quad
E_m=\left[\begin{array}{r r}-8m & 4\\ 4 &-2\end{array}\right]\quad
F_m=\left[\begin{array}{r r r}-2(m-1) & 2 & 0\\ 2 &0 &4\\ 0 &4 &0\end{array}\right]
\end{gather*}}
\normalsize
\end{theorem}
\begin{proof}
We use \cite[Prop. 1.15.1]{Nikulin1} to determine all primitive embeddings $j:S\hookrightarrow\Lambda_{\mathrm{K3^{[2]}}}$ of each of the lattices $S$ satisfying Definition \ref{def:proj_families}: we find that we can distinguish each $j$ by the orthogonal complement to $j(S)$, that is the corresponding lattice $T$ appearing in the table.\\
The discriminant group  of the ambient lattice $\Lambda_{\mathrm{K3^{[2]}}}$ is $\mathbb Z/2\mathbb Z$ with discriminant form $q=[3/2]$. Since we've fixed the embedding $\Omega_4\hookrightarrow\Lambda_{\mathrm{K3}}\hookrightarrow \Lambda_{\mathrm{K3}}\oplus\mathbb Z\mu$, we can see that for each $S$ we always have at least the primitive embedding orthogonal to $\mathbb Z\mu$ (see \cite[Ex. 5.1.6]{P}); as necessary condition to have alternative embeddings, the discriminant form of $S$ has to contain an element of order 2 and square 3/2: if there is more than one such element, we check according to \cite[Prop. 1.15.1]{Nikulin1} if they give rise to different embeddings. 

The lattice $\Omega_4\oplus\langle2d\rangle$ has discriminant form
\[q_{\Omega_4\oplus\langle2d\rangle}:=\begin{bmatrix}0 &1/4\\1/4 &0\end{bmatrix}^{\oplus 2}\oplus\begin{bmatrix}1/2\end{bmatrix}^{\oplus 2}\oplus \begin{bmatrix}1/2d\end{bmatrix}:\]
let $\gamma$ be the generator of the subgroup $[1/2d]$, $\alpha_1,\alpha_2$ those of $[1/2]^{\oplus 2}$, $x_1,x_2$ those of one of the $ \begin{bmatrix}0 &1/4\\1/4 &0\end{bmatrix}$ blocks. For $d=_4 3$, $d\gamma$ has order 2 and square $3/2$; for $d=_4 1$, $d\gamma$ has order 2 and square $1/2$, so $\alpha_1+\alpha_2+d\gamma$ has order 2 and square $3/2$; for $d=_4 2$, $d\gamma$ has order 2 and square $1$, so $\alpha_1+d\gamma$ has order 2 and square $3/2$; for $d=_4 0$, $d\gamma$ has order 2 and square $0$, so we have no alternative embeddings. 
The alternative embeddings of $\Omega_4\oplus\langle2d\rangle$ are realized by the following classes of square $2d$:
\begin{itemize}
\item for $d=4m-3$, $M_1(m)=2(s_1+ms_2)+w_2-w_1+\mu$, %
\item for $d=4m-2$, $M_2(m)=2(s_1+ms_2)+w_1+\mu$;
\item for $d=4m-1$, $M_3(m)=2(s_1+ms_2)+\mu$.
\end{itemize}
For $d=_4 1$, the class
\[\tilde M_1(m)=2(s_1+ms_2)+w_3+w_4+w_2-w_1+\mu\]
provides a third different primitive embedding of $\Omega_4\oplus\langle2d\rangle$ in the ambient lattice: the associated subgroup of $q_{\Omega_4\oplus\langle2d\rangle}$ is generated by $\alpha_1+\alpha_2+d\gamma+x_1+x_2$.\\
For $d=_4 0$ the lattice $(\Omega_4\oplus\langle2d\rangle)'$ admits another primitive embedding, realized by the following classes:
\begin{itemize}
\item for $d=4(m-1)$, $M_4(m)=w_3+mw_4+2\mu$.
\end{itemize}
These are the only cases in which there exist alternative embeddings, as the discriminant group of the other Néron-Severi lattices does not contain any element of order 2 and square $3/2$.
\end{proof}

\subsection[The case $G=(\mathbb Z/2\mathbb Z)^2$]{Families of projective K3$^{[2]}$-type manifolds with a symplectic action of $(\mathbb Z/2\mathbb Z)^2$}\label{K3[2]withKlein}

Let $G=(\mathbb Z/2\mathbb Z)^2=\langle\tau,\varphi\rangle$. By Remark \ref{G_action_standard}, it holds
\[\Lambda_{\mathrm{K3}^{[2]}}^G=\Lambda_{\mathrm{K3}}^G\oplus\langle -2\rangle,\]
where
\begin{equation}\label{marking_invariant_Z22}
\xymatrix@C=0.001pc{
{\begin{matrix}\Lambda_{\mathrm{K3}}^G \simeq\\ \ \end{matrix}} &{\begin{matrix}U \\ s_1, s_2\end{matrix}} &{\begin{matrix}\oplus\\ \ \end{matrix}} &{\begin{matrix}U(2)^{\oplus 2} \\ u_1,\dots, u_4\\\end{matrix}} &{\begin{matrix}\oplus\\ \ \end{matrix}} &{\begin{matrix}D_4(2) \\ m_1,\dots,  m_4\end{matrix}}} \end{equation}

\begin{theorem}\label{familieshyperkahlerKlein}
The projective families of K3$^{[2]}$-type manifolds $X$ with a symplectic action of $(\mathbb Z/2\mathbb Z)^2$ correspond to the pairs $(S,T)$ appearing in the following table, where the class $L$ indicated is the generator of $\langle 2d\rangle=\Omega_{2,2}^{\perp_{S}}$, and $T=S^{\perp}$: the classes $L_0, L_{i,j}^{(h)}$ are defined in \cite[Table 2]{P2}, while the $M_i$ are defined in the proof below.
\end{theorem}
\footnotesize
\begin{longtable}{|c|c|c|c|c|}
\cline{2-5}
\nocell{1} &{$S$\Tstrut} &{$T$ \Tstrut} & $L$ &$L^2$\\ [6pt]
\hline
\multirow{2}{*}{$d=_4 1$\Tstrut}	&\multirow{2}{*}{$\Omega_{2,2}\oplus\langle 2d\rangle$\Tstrut} & $\langle-2d\rangle\oplus\langle-2\rangle\oplus U(2)^{\oplus 2}\oplus D_4(2)$\Tstrut &$L_0(d)$ &$2d$\\[6pt]
\ &\ & $U(2)^{\oplus 2}\oplus B_m$ &$M_1(m)$ &$2(4m-3)$ \\[6pt]
\hline
\multirow{2}{*}{$d=_4 3$\Tstrut}	&\multirow{2}{*}{$\Omega_{2,2}\oplus\langle 2d\rangle$\Tstrut} & $\langle-2d\rangle\oplus\langle-2\rangle\oplus U(2)^{\oplus 2}\oplus D_4(2)$\Tstrut &$L_0(d)$ &$2d$\\[6pt]
\ &\ & $U(2)^{\oplus 2}\oplus D_4(2)\oplus G_m$ &$M_3(m)$ &$2(4m-1)$ \\[6pt]
\hline
\multirow{3}{*}{$d=_4 2$\Tstrut}	&{$\Omega_{2,2}\oplus\langle 2d\rangle$\Tstrut} & $\langle-2d\rangle\oplus\langle-2\rangle\oplus U(2)^{\oplus 2}\oplus D_4(2)$\Tstrut &$L_0(d)$ &$2d$\\[6pt]
\ &$(\Omega_{2,2}\oplus\langle 2d\rangle)'$\Tstrut & $\langle-2\rangle\oplus D_4(2)\oplus P_h$ &$L_{2,2}^{(a,b)}(h)$ &$2(4h+2)$ \\[6pt]
\hline
\multirow{5}{*}{$d=_8 0$\Tstrut} &{$\Omega_{2,2}\oplus\langle 2d\rangle$\Tstrut} &$\langle-2d\rangle\oplus\langle-2\rangle\oplus U(2)^{\oplus 2}\oplus D_4(2)$\Tstrut &$L_0(d)$ &$2d$\\
\ &{$(\Omega_{2,2}\oplus\langle 2d\rangle)'^{(1)}$\Tstrut} &$\langle-2\rangle\oplus U\oplus R_h$\Tstrut &$L_{2,0}^{(1)}(h)$ &$2(4h)$\\[6pt]
\cdashline{2-5}
\ &\multirow{2}{*}{$(\Omega_{2,2}\oplus\langle 2d\rangle)'^{(2)}$\Tstrut} &{$\langle-2\rangle\oplus U(2)^{\oplus 2}\oplus Q_h$\Tstrut} &$L_{2,0}^{(2)}(h)$ &$2(4h-4)$ \\ 
\ &\ &$U\oplus U(2)\oplus C_m$\Tstrut   &$M_8(m)$ &$2(8m-8)$\\ [6pt]
\hline
\multirow{5}{*}{$d=_8 4$\Tstrut}&{$\Omega_{2,2}\oplus\langle 2d\rangle$\Tstrut} & $\langle-2d\rangle\oplus\langle-2\rangle\oplus U(2)^{\oplus 2}\oplus D_4(2)$\Tstrut &$L_0(d)$ &$2d$\\
\ &{$(\Omega_{2,2}\oplus\langle 2d\rangle)'^{(1)}$\Tstrut} &$\langle-2\rangle\oplus U\oplus R_h$\Tstrut &$L_{2,0}^{(1)}(h)$ &$2(4h)$\\
\ &{$(\Omega_{2,2}\oplus\langle 2d\rangle)'^{(2)}$\Tstrut} &{$\langle-2\rangle\oplus U(2)^{\oplus 2}\oplus Q_h$\Tstrut} &$L_{2,0}^{(2)}(h)$ &$2(4h-4)$ \\
\ &{$(\Omega_{2,2}\oplus\langle 2d\rangle)^\star$\Tstrut} &$\langle-2\rangle\oplus T^{(\pm4)}_h$ &$L_{4,\pm 4}(h)$ &$2(16h\pm 4)$\TBstrut\\ \hline
\end{longtable}
\scriptsize
\begin{gather*}
G_m=\left[\begin{array}{r r}-2m &1\\ 1&-2\end{array}\right] \quad 
B_m = \left[\begin{array}{ c | c }
    G_m & \begin{array}{r r r r}1 & 1 &-2 &1 \\0 & 0 & 0 & 0 \end{array}\\
    \hline
    \begin{array}{c c} 1 & 0 \\
  1 & 0 \\
 -2 & 0 \\
  1 & 0 \end{array} & D_4(2)
  \end{array}\right]
C_m = \left[\begin{array}{ r | c }
    \begin{array}{r r}-4m &2\\ 2 &-2\end{array} & \begin{array}{r r r r}-2 & 2 &0 &0 \\0 & 0 & 0 & 0\end{array}\\
    \hline
    \begin{array}{r r} -2 & 0 \\
  2 & 0 \\
  0 & 0 \\
  0 & 0 \end{array} & D_4(2)
  \end{array}\right]
\\[10pt]
P_h = \left[\begin{array}{ c | c }
    -2h & \begin{array}{r r r r} 1 & 1 &1 &0\end{array}\\
    \hline
    \begin{array}{r} 1 \\ 1\\ 1 \\ 0\end{array} & U(2)^{\oplus 2}
  \end{array}\right]\enspace
Q_h = \left[\begin{array}{ r | c }
    -2h & \begin{array}{r r r r} 2 & 0 &0 &0\end{array}\\
    \hline
    \begin{array}{r} 2 \\ 0\\ 0 \\ 0\end{array} & D_4(2)
  \end{array}\right]\enspace
T^{(-4)}_h= U(2)^{\oplus 2}\oplus
\left[\begin{array}{ r | c }
    -2h & \begin{array}{r r r r} 1 & 0 &0 &0\end{array}\\
    \hline
    \begin{array}{r} 1 \\ 0\\ 0 \\ 0\end{array} & D_4(2)
  \end{array}\right]\\[10pt]
T^{(4)}_h= U\oplus\left[\begin{array}{r r r r r r r }-8h &-2 &0 &0 &0 &0 &0\\
     -2 &0 &2 &0 &0 &0 &0\\
     0 &2 &-4 &-2 &0 &0 &0\\
     0 &0 &-2 &-4 &-4 &2 &0\\
     0 &0 &0 &-4 &-8 &4 &0\\
     0 &0 &0 &2 &4 &-4 &2\\
     0 &0 &0 &0 &0 &2 &-4\\\end{array}\right]
\enspace
R_h=\left[\begin{array}{r r r r r r r}
-8(h+1) &2 &0 &0 &0 &0 &0\\
     2 &0 &2 &0 &0 &0 &0\\
     0 &2 &4 &2 &0 &0 &0\\
     0 &0 &2 &-4 &2 &0 &0\\
     0 &0 &0 &2 &-4 &4 &0\\
     0 &0 &0 &0 &4 &-8 &4\\
     0 &0 &0 &0 &0 &4 &-8
\end{array}\right]\\
\end{gather*}
\normalsize
\begin{proof}
The proof is analogous to that of Theorem \ref{familieshyperkahlerZ4}. We now use the lattices $S$ described for K3 surfaces $\Sigma$ in \cite[Thm. 3.1.3]{P2}; again, fixing the embedding $\Omega_{2,2}\hookrightarrow\Lambda_{\mathrm{K3}}\hookrightarrow \Lambda_{\mathrm{K3}}\oplus\mathbb Z\mu\simeq\Lambda_{\mathrm{K3}^{[2]}}$, for each $S\simeq NS(\Sigma)$ we have at least the embedding such that $T\simeq T(\Sigma)\oplus\langle -2\rangle$ (see \cite[Table 2]{P2}). The additional choices are as follows: for $d$ odd, the lattice $\Omega_{2,2}\oplus\langle2d\rangle$ admits another primitive embedding, realized by the following classes of square $2d$ in $\Lambda_{\mathrm{K3}^{[2]}}^G=\Lambda_{\mathrm{K3}}^G\oplus\mathbb Z\mu$:
\begin{itemize}
\item for $d=4m-3$, $M_1(m)=2(s_1+ms_2)+m_3+\mu$;
\item for $d=4m-1$, $M_3(m)=2(s_1+ms_2)+\mu$.
\end{itemize}
For $d=_8 0$ the lattice $(\Omega_{2,2}\oplus\langle2d\rangle)'^{(2)}$ admits another primitive embedding, realized by
\begin{itemize}
\item $M_8(m)=2(\mu+u_3+mu_4)+m_2-m_1$.
\end{itemize}
These are the only cases in which there exist alternative embeddings, as the discriminant group of the other lattices $S$ does not contain any element of order 2 and square $3/2$.
\end{proof}

\subsection{Fixed loci}

\begin{remark}\label{fixed_loci_deformation}
Let $X$ be a K3$^{[2]}$-type manifold, and $G$ a finite group acting symplectically on $X$ and fixing a class $\mu$ of square $-2$ and divisibility 2. Then,
the locus of points of $X$ with non-trivial stabilizer will be diffeomorphic to that of the natural action of $G$ on $S^{[2]}$, where $S$ is a K3 surface admitting itself a symplectic action of $G$. Indeed, the fixed locus for any element of $G$ is smooth and consists of points, abelian and K3 surfaces \cite[Prop. 2.6]{Fujiki}, but smooth deformations of a K3 (resp. abelian) surface are still K3 (resp. abelian) surfaces, and smooth deformations of points are points. The number of points, K3 and abelian surfaces in the locus with non-trivial stabilizer under the action of $G$, and their intersections, are therefore preserved under deformations.
\end{remark}

\begin{proposition}\label{fixZ4onhyperkahler}
Let $G=\mathbb Z/4\mathbb Z$ act symplectically on a K3$^{[2]}$-type manifold $X$, let $\tau$ be a generator of $G$. Then $\tau$ fixes 16 points, of which exactly 8 lie on the K3 surface $\Sigma$ fixed by $\tau^2$.
\end{proposition}
\begin{proof}
By Remark \ref{fixed_loci_deformation} we can reduce to $X=S^{[2]}$, where $S$ is a K3 surface with a symplectic action of $G$. Call $\tau$ both the generator of the action of $G$ on $S$, and that of the natural action on $X$.\\
On $S$, $\tau$ fixes 4 points $\{p_1,\dots p_4\}$ and exchanges two pairs of points, $q_1\mapsto q_2,\ r_1\mapsto r_2$ (these are fixed by $\tau^2$) \cite[\S 5.2]{Nikulin2}. \\
Let $[s_1,s_2]$ with $s_1\neq s_2$ be the class in $S^{[2]}$ of the unordered pair of points $\{s_1,s_2\}\subset S$, and denote $\Delta$ the exceptional line bundle resulting from the blow-up of the singular locus of $Sym^2(S)$. The natural action of $\tau^2$ on $S^{[2]}$ fixes 28 isolated points: 6 of the form $[p_i,p_j]$ with $i\neq j\in\{1,\dots, 4\}$, 16 of the form $[p_i,q_j]$ or $[p_i,r_j]$ with $i\in\{1,\dots, 4\}, j\in\{1,2\}$, 4 of the form $[q_i,r_j],\ i,j\in\{1,2\}$, and the points $[q_1,q_2],[r_1,r_2]$; moreover, it fixes the K3 surface $\Sigma$, the closure of the set $\{[s,\tau^2(s)],\ s\in S\}$. The intersection $\Sigma\cap \Delta$ consists of 8 lines, over the points $[p_i,p_i],[q_j,q_j],[r_j,r_j],  i\in\{1,\dots, 4\}, j\in\{1,2\}$. The involution induced by the action of $G/\langle\tau^2\rangle$ on $\Sigma$ exchanges pairwise the four lines over $[q_j,q_j],[r_j,r_j]$, and fixes two points on each of the remaining four lines.
\end{proof}

\begin{proposition}\label{fixKleinonhyperkahler}
Let $G=(\mathbb Z/2\mathbb Z)^2$ act symplectically on a K3$^{[2]}$-type manifold $X$, let $\tau,\varphi,\rho$ be the three involutions in $G$. The action of $G$ stabilizes (with order 2) 72 isolated points and three K3 surfaces $\Sigma_\tau,\Sigma_\varphi,\Sigma_\rho$, each fixed by the respective involution. The three surfaces intersect pairwise in 4 points as follows: the points in $\Sigma_\tau,\Sigma_\varphi$ belong to the set of 28 isolated points fixed by $\rho$, and similarly the other pairs. The fixed locus of $G$ consists therefore of 12 points, lying in the intersection of the three K3 surfaces.
\end{proposition}
\begin{proof}
Call $\{t_1,\dots t_8\},\{q_1,\dots q_8\},\{r_1,\dots r_8\}$ the points fixed on a K3 surface respectively by $\tau,\varphi,\rho$ (these are 24 distinct points, see \cite[\S 5.8a]{Nikulin2}): the involutions $\varphi,\rho$ act on the set $\{t_1,\dots t_8\}$ exchanging them pairwise in the same way, say $t_{2i-1}\leftrightarrow t_{2i}$. Similarly, each two involutions act in the same way on the set of points fixed by the third one.\\
Again, the involution $\tau$ on $S^{[2]}$ fixes a K3 surface $\Sigma_\tau$ given by the points $[s,\tau(s)]$, and the 28 isolated points $[t_i,t_j]$ for $i\neq j\in\{1,\dots 8\}$ (similarly the other two involutions); moreover, it holds
\[\Sigma_\tau\cap\Sigma_\varphi=\{[p,\tau(p)]=[q,\varphi(q)]\}=\{[r_i,\tau(r_i)]\},\]
which are four of the isolated points fixed by $\rho$. The other pairs of fixed surfaces intersect similarly. Notice that each involution acts on each of the surfaces fixed by the other involutions (for instance, $\tau$ acts on $\Sigma_\rho$ and $\Sigma_\varphi$).
\end{proof}

\subsection{Projective examples}

\begin{remark}\label{S2notgeneral}
Let $S$ be a general member in the moduli space of projective K3 surfaces with a symplectic action of some group $G$. The Hilbert square $S^{[2]}$ is not a general member in the moduli space of K3$^{[2]}$-type manifolds with a symplectic action of $G$: indeed, $NS(S^{[2]})=NS(S)\oplus\langle -2\rangle$ is an overlattice of finite index of $\Omega_G\oplus\langle 2d\rangle\oplus \langle -2\rangle$, so it has a bigger rank than that of a general member $X$, for which $NS(X)$ is an overlattice of finite index of $\Omega_G\oplus\langle 2d\rangle$.
\end{remark}

Projective models of K3$^{[2]}$-type manifolds with a symplectic involution have been constructed in \cite{Camere} and \cite{CGKK}. We are going to adapt some of these constructions to groups of order 4.

\textbf{Actions on the Fano variety of lines of a cubic fourfold}. \\
Let $\mathcal C$ be a cubic fourfold in $\mathbb P^5$: if $\mathcal C$ is smooth, the Fano variety of lines $F(\mathcal C)$ is an hyperk\"ahler manifold of K3$^{[2]}$-type. We recall here two important results about these manifolds:
\begin{proposition}[\protect{\cite[Prop. 2, Prop. 4, Prop. 6.ii]{BD}}]
\begin{enumerate}
\item  There exists a Hodge isometry $H^4(\mathcal C, \mathbb Z) \simeq H^2(F(\mathcal C), Z)$, that maps $H^{i,j}(\mathcal C)$ to $H^{i-1,j-1}F(\mathcal C)$.
\item The Fano variety of lines of a Pfaffian cubic fourfold $\mathcal C_V$ is isomorphic to the Hilbert scheme of two points over a K3 surface $S_V$ of degree 14: this gives a Hodge isometry $H^2(F(\mathcal C_V), \mathbb Z) \simeq H^2(S^{[2]}_V, Z)\simeq H^2(S_V, Z)\oplus\langle -2\rangle$; in particular, the image of the Pl\"ucker polarization $g$ of $F(\mathcal C_V)$ is $2L-5\delta$, where $L$ is the polarization of degree 14 of $S_V$ and $\delta$ is half the class of the exceptional divisor of $S^{[2]}_V\rightarrow Sym^2(S_V)$.
\end{enumerate}
\end{proposition}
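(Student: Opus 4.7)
The plan is to treat the two parts with quite different techniques. For (1) I would use the incidence correspondence: let $\mathcal{L}\subset F(\mathcal{C})\times\mathcal{C}$ be the universal line, with projections $p:\mathcal{L}\to F(\mathcal{C})$ and $q:\mathcal{L}\to\mathcal{C}$. The first is a $\mathbb{P}^1$-bundle (the line parameterised by a point), and the second is generically finite (a generic point of a smooth cubic fourfold lies on finitely many lines, in fact six). The natural candidate for the Hodge isometry is the Abel--Jacobi map
\[
\alpha := p_{*}\circ q^{*}:H^{4}(\mathcal{C},\mathbb{Z})\longrightarrow H^{2}(F(\mathcal{C}),\mathbb{Z}).
\]
Because $q^{*}$ preserves Hodge bidegree and $p_{*}$ is integration along $\mathbb{P}^1$-fibres, which shifts bidegree by $(-1,-1)$, the map $\alpha$ has the stated Hodge bidegree shift. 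I would then check that both sides are free of rank $23$ (rank of the primitive cohomology plus the hyperplane class for $\mathcal{C}$; Beauville's formula for $H^{2}(F(\mathcal{C}),\mathbb{Z})$ on the other side), and compare the intersection form on $H^{4}(\mathcal{C},\mathbb{Z})$ with the Beauville--Bogomolov--Fujiki form on $H^{2}(F(\mathcal{C}),\mathbb{Z})$ through the projection formula and Poincaré duality; non-degeneracy then forces $\alpha$ to be an isometry (up to an overall sign), hence an isomorphism.

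For (2), I would first recall the Pfaffian construction. Given a $3$-dimensional space of skew-symmetric $6\times 6$ matrices of linear forms on $\mathbb{P}^{5}$, i.e.\ a net of alternating $2$-forms on a $6$-dimensional vector space $V$, the vanishing of the Pfaffian defines a cubic fourfold $\mathcal{C}_{V}$, while the locus $S_{V}\subset G(2,V^{*})$ of $2$-planes isotropic for every element of the net is a smooth K3 surface of degree $14$ under the Plücker embedding. The Beauville--Donagi isomorphism $F(\mathcal{C}_{V})\cong S_{V}^{[2]}$ is produced by an incidence correspondence: to a generic line $\ell\subset\mathcal{C}_{V}$ one associates the length-two subscheme of $S_{V}$ parametrising the two skew forms (in the net) that degenerate along $\ell$. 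I would verify that this map extends to an isomorphism by showing it is bijective on points and étale, then conclude with the characterisation of K3$^{[2]}$-type manifolds.

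Combining (1) with the Beauville--Donagi isomorphism yields the Hodge isometry
\[
H^{2}(F(\mathcal{C}_{V}),\mathbb{Z})\simeq H^{2}(S_{V}^{[2]},\mathbb{Z})\simeq H^{2}(S_{V},\mathbb{Z})\oplus\mathbb{Z}\delta,
\]
and it remains to identify $\alpha(g)=2L-5\delta$. I would do this by testing against two independent classes: pairing with the class of a line contained in a fibre of $\mathcal{L}\to\mathcal{C}$ recovers the coefficient of $L$ (via the degree $14$ of $S_{V}$), and pairing with the exceptional class $\delta$ (represented by non-reduced length-two subschemes, i.e.\ by lines in $\mathcal{C}_{V}$ passing through a singular point of the discriminant loci) recovers the coefficient $-5$. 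Using $g^{2}=108$ on $F(\mathcal{C}_{V})$ and $(2L-5\delta)^{2}=4\cdot 14-25\cdot 2=108$ on $S_{V}^{[2]}$ furnishes a cross-check.

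The main obstacle is the Pfaffian identification in (2): constructing the incidence map between lines in $\mathcal{C}_{V}$ and length-two subschemes of $S_{V}$ and showing it extends to a global isomorphism requires a careful analysis of the degeneracy loci of the net, including the non-generic behaviour over the loci where the Pfaffian drops rank further than expected. Once this geometric correspondence is in hand, the computation of $\alpha(g)$ reduces to two explicit intersection numbers, which is tractable; by comparison, the Hodge-theoretic statement (1) is relatively formal once the universal line and Abel--Jacobi map are set up.
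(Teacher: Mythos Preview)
The paper does not prove this proposition; it is quoted from Beauville--Donagi \cite{BD} and used as a black box, so there is no in-paper argument to compare against. Your overall strategy---Abel--Jacobi map from the universal line for part (1), and the explicit Pfaffian/Grassmannian duality for part (2)---is indeed the approach of the original reference.

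That said, your sketch has two concrete errors that would derail the argument if carried out as written. First, the Pfaffian set-up is misdescribed: a Pfaffian cubic fourfold comes from a \emph{six}-dimensional linear subspace $L\subset\Lambda^{2}V^{*}$ with $\dim V=6$ (equivalently, a single $6\times 6$ skew matrix whose entries are linear forms on $\mathbb{P}^{5}$), not a three-dimensional one; a ``net'' of alternating forms would parametrise only a $\mathbb{P}^{2}$, not a $\mathbb{P}^{5}$. The associated K3 surface is $S_{V}=G(2,V)\cap\mathbb{P}(L^{\perp})\subset\mathbb{P}(\Lambda^{2}V)$, and the map $F(\mathcal{C}_{V})\to S_{V}^{[2]}$ sends a line $\ell\subset\mathbb{P}(L)$ to a length-two subscheme constructed via the common kernel of the pencil of $2$-forms parametrised by $\ell$, not via ``the two skew forms that degenerate along $\ell$'' as you wrote.

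Second, your numerical cross-check is wrong: in the Beauville--Bogomolov--Fujiki form the Pl\"ucker class satisfies $g^{2}=6$, not $108$ (the number $108=\int_{F}g^{4}$ is the degree of $F(\mathcal{C})$ in the Pl\"ucker embedding, a different invariant). And indeed $(2L-5\delta)^{2}=4\cdot 14-25\cdot 2=6$, not $108$; your own arithmetic on the right-hand side already gives $6$. With these two corrections the outline is sound and matches the original proof.
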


\begin{remark}\label{condizioni_Fano}
As a consequence of the first statement we get that an automorphism $\phi$ of $\mathcal C$  lifts to a symplectic automorphism of $F(\mathcal C)$ if and only if $\phi$ acts as the identity on $H^{3,1}(\mathcal C)$. As a consequence of the second, the Fano variety of lines of a general cubic fourfold $F(\mathcal C)$ is naturally polarized with a class of square 6 and divisibility 2, as these properties hold for $2L-5\delta$ in the special case of Pfaffians.
\end{remark}

In \cite{Camere} it is shown that the only case (up to a change of coordinates) in which an involution $i$ of a cubic fourfold $\mathcal C$ lifts to a symplectic involution of $F(\mathcal C)$ is when $i$ is induced by the automorphism of $\mathbb P^5$ $\phi:(x_0:x_1:x_2:x_3:x_4:x_5)\mapsto(-x_0:-x_1:x_2:x_3:x_4:x_5)$ and the cubic fourfold has equation
\[\mathcal C_2: x_0^2\lambda_1(x_2,x_3,x_4,x_5)+ x_1^2\lambda_2(x_2,x_3,x_4,x_5)+ x_0x_1\lambda_3(x_2,x_3,x_4,x_5)+\Gamma(x_2,x_3,x_4,x_5)=0\] 
where the $\lambda_i$ are linear, while $\Gamma$ is cubic. Moreover, in the same paper there is a description of the fixed locus of the symplectic involution on $F(\mathcal C_2)$: it consists of 28 points, given by the line $x_2=x_3=x_4=x_5=0$ and the 27 lines on the cubic threefold $\Gamma(x_2,x_3,x_4,x_5)=0$, and the K3 surface $\Sigma\subset\mathbb P^1\times \mathbb P^3$ described by the complete intersection
\begin{equation}\label{SigmaCamere} \Sigma:
    \begin{cases}
      \Gamma(x_2,x_3,x_4,x_5)=0\\
      x_0^2\lambda_1(x_2,x_3,x_4,x_5)+x_1^2\lambda_2(x_2,x_3,x_4,x_5)+x_0x_1\lambda_3(x_2,x_3,x_4,x_5)=0.
    \end{cases}
\end{equation}

\textbf{1: Action of $\mathbb Z/4\mathbb Z$}. 
In \cite{Fu} it is shown that (up to a change of coordinates) of all the automorphisms $\psi$ of $\mathbb P^5$ of order four such that $\psi^2=\phi$, the only one that acts on a smooth cubic hypersurface $\mathcal C_4$ such that $\psi^*|_{H^{3,1}(\mathcal C_4)}=id$ is $\psi:(x_0:x_1:x_2:x_3:x_4:x_5)\mapsto(ix_0:-ix_1:x_2:x_3:-x_4:-x_5)$, with
\[\mathcal C_4: C(x_2,x_3)+x_2Q_1(x_4,x_5)+x_3Q_2(x_4,x_5)+x_0^2\ell_1(x_4,x_5)+x_1^2\ell_2(x_4,x_5)+x_0x_1\ell_3(x_2,x_3)=0,\]
where $C$ is a cubic polynomial, $Q_i$ are quadric and $\ell_i$ are linear (it is a specialization of $\mathcal C_2$). Since this equation depends on 16 projective parameters, and the space of projectivities of $\mathbb P^5$ that commute with $\phi$ has dimension 10, $F(\mathcal C_4)$ is the general member of a family of projective K3$^{[2]}$-type manifolds with a symplectic automorphism of order 4: by Remark \ref{condizioni_Fano}, this is the one associated to the polarization $M_3(1)$ in Theorem \ref{familieshyperkahlerZ4}.\\
Following \cite[\S 7]{Camere}, we can give a model in $\mathbb P^1_{(x_0:x_1)}\times\mathbb P^3_{(x_2:\dots: x_5)}$ of the surface fixed by $\tau^2$:
\begin{equation*} \Sigma_4:
    \begin{cases}
      C(x_2,x_3)+x_2Q_1(x_4,x_5)+x_3Q_2(x_4,x_5)=0\\
      x_0^2\ell_0(x_4,x_5)+x_1^2\ell_1(x_4,x_5)+x_0x_1\ell_2(x_2,x_3)=0.
    \end{cases}
\end{equation*}
Notice that this is a specialization of $\Sigma$ \eqref{SigmaCamere}, with the additional property that it admits a symplectic involution: indeed the residual involution induced by $\psi$ fixes 8 points on $\Sigma_4$.

We conclude with the description of the fixed locus of the automorphism of order 4 $\tau$ on $F(\mathcal C_4)$ (this was partially computed in \cite{Fu}, missing one line). There are 16 lines on $\mathcal C_4$ which are fixed by $\tau\coloneqq\psi|_{\mathcal C_4}$: calling $P_0=(1:0:0:0:0:0)$, $P_1=(0:1:0:0:0:0)$, we have the six lines that join one of the points $P_0,P_1$ with one of the three solutions $\tilde P_j=(0:0:s_j:t_j:0:0)$ of the system $\{C(x_2,x_3)=0,\ x_0=x_1=x_4=x_5=0\}$, and the two lines that join $P_0$ with the solution of $\{\ell_1(x_4,x_5)=0,\ x_0=x_1=x_2=x_3=0 \}$, $P_1$ with the solution of $\{ \ell_2(x_4,x_5)=0,\ x_0=x_1=x_2=x_3=0\}$: these give the 8 fixed points of $F(\mathcal C_4)$ belonging to the K3 surface fixed by $\tau^2$. \\
Moreover, we have the lines $(0:0:0:0:x_4:x_5)$ and $(x_0:x_1:0:0:0:0)$, and the six lines that join each of the $\tilde P_j$ with each of the two solutions of $\{s_jQ_1(x_4,x_5)+t_jQ_2(x_4,x_5)=0,\ x_0=x_1=0\}$: these give the 8 points fixed by $\tau$ among the 28 isolated points fixed by $\tau^2$.

\textbf{2: Action of $(\mathbb Z/2\mathbb Z)^2$}.
Consider the action of $(\mathbb Z/2\mathbb Z)^2$ on $\mathbb P^5$ given by 
\begin{align*}
(x_0:x_1:x_2:x_3:x_4:x_5)&\xmapsto{\tau}(-x_0:-x_1:x_2:x_3:x_4:x_5)\\
&\xmapsto{\varphi}(-x_0:x_1:-x_2:x_3:x_4:x_5)
\end{align*}
and the invariant cubic fourfold for this action
\[\mathcal C_{2,2}: x_0x_1x_2+x_0^2\ell'_0(x_3,x_4,x_5)+x_1^2\ell'_1(x_3,x_4,x_5)+x_2^2\ell'_2(x_3,x_4,x_5)+C'(x_3,x_4,x_5)=0,\]
where $\ell'_i$ are linear and $C'$ is cubic (again, it is a specialization of $\mathcal C_2$); 
counting parameters, we find that $F(\mathcal C_{2,2})$ is the general member of a family of projective K3$^{[2]}$-type manifolds with a symplectic action of $(\mathbb Z/2\mathbb Z)^2$ -- the one in Theorem \ref{familieshyperkahlerKlein} associated to $M_3(1)$.\\
Again, the K3 surface fixed by $\tau$ can be described in $\mathbb P^1_{(x_0:x_1)}\times\mathbb P^3_{(x_2:\dots: x_5)}$ as:
\begin{equation*} \Sigma_{2,2}:
    \begin{cases}
      x_2^2\ell'_2(x_3,x_4,x_5)+C'(x_3,x_4,x_5)=0\\
      x_0^2\ell'_0(x_3,x_4,x_5)+x_1^2\ell'_1(x_3,x_4,x_5)+x_0x_1x_2=0.
    \end{cases}
\end{equation*}
The surfaces fixed by the other involutions in $(\mathbb Z/2\mathbb Z)^2$ are described by similar equations, obtained by permutation of the coordinates.

\begin{proposition}
There are exactly two families of K3 surfaces with a symplectic involution that admit a projective model as smooth complete intersection in $\mathbb P^1\times\mathbb P^3$, such that the involution comes from a linear action on the ambient space. The surfaces $\Sigma_4$ and $\Sigma_{2,2}$ are general members of these families.
\end{proposition}
\begin{proof}
By adjunction, a smooth complete intersection of hypersurfaces in $\mathbb P^1\times\mathbb P^3$ is a K3 surface if their bidegrees are $(2,1)$ and $(0,3)$ respectively. We can choose coordinates of the ambient space such that an involution just changes sign to a certain number of them, and we then consider the equations of the general invariant K3 surface.
Involutions that act as the identity on either component are not symplectic: this can be checked directly on the general equations. The only suitable involutions are therefore:
\begin{align*}
i_1: &(x_0:x_1)(x_2:x_3:x_4:x_5)\mapsto(x_0:-x_1)(x_2:x_3:x_4:-x_5),\\
i_2: &(x_0:x_1)(x_2:x_3:x_4:x_5)\mapsto(x_0:-x_1)(x_2:x_3:-x_4:-x_5).\\
\end{align*}
Invariant K3 surfaces for $i_1$ are described by equations that contain only monomials on which $i_1$ acts as the identity, otherwise the resulting surfaces are singular: therefore they satisfy the same equations as $\Sigma_{2,2}$. Invariant K3 surfaces for $i_2$ are given by taking equations such that one contains only monomials on which $i_2$ acts as the identity, and the other only monomials on which it acts as its opposite: indeed, taking both equations in the positive (or negative) eigenspace, we find that the resulting surface is completely fixed by $i_2$, so it has no involution acting on it. Taking equations in different eigenspaces, we obtain $\Sigma_4$ as a general member.
\end{proof}

\textbf{Non natural actions on the Hilbert square of a K3 surface}.
We recall the construction of Beauville's involution \cite{Beauville2}: it is a non-symplectic, non-natural involution defined on $S^{[2]}$ for a smooth quartic surface $S\subset\mathbb P^3$ without lines (a condition satisfied by the general smooth quartic surface).
The Hilbert square $S^{[2]}$ parametrizes unordered pairs of points $[p,q]$ of $S$; take the line $\ell\subset\mathbb P^3$ through $p$ and $q$: then $\ell\cap S=\{p,q,P,Q\}$ (not necessarily distinct). Define Beauville's involution $\beta$ on $S^{[2]}$ generically as $[p,q]\mapsto[P,Q]$. Since $S$ is general, $NS(S)$ is spanned by the hyperplane class $H$, that has self-intersection $4$; therefore $NS(S^{[2]})=\langle H\rangle\oplus\langle\mu\rangle=\langle 4\rangle\oplus\langle -2\rangle$; the isometry $\beta^*$ has invariant lattice $\langle 2\rangle$, generated by the class $H-\mu$.

\begin{proposition}\label{Beauvillecommuta}
Let $S\subset\mathbb P^3$ be a smooth quartic surface without lines, that has an automorphism $\alpha$ that preserves the polarization: then, the induced automorphism $\alpha$ on $S^{[2]}$ commutes with Beauville's involution $\beta$. \end{proposition}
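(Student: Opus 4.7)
The plan is to reduce the statement to a set-theoretic identity on a dense open subset of $S^{[2]}$ and then conclude by continuity, exploiting the fact that any automorphism of $S$ preserving the hyperplane class is the restriction of a projectivity of $\mathbb{P}^3$.

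First I would observe that, since $S\subset\mathbb{P}^3$ is embedded by the complete linear system $|H|$, the hypothesis $\alpha^*H=H$ implies that $\alpha$ is the restriction to $S$ of a linear automorphism $\tilde\alpha\in\mathrm{PGL}(4)$ of the ambient $\mathbb{P}^3$. In particular $\tilde\alpha$ sends lines of $\mathbb{P}^3$ to lines of $\mathbb{P}^3$, and maps the four-point scheme $\ell\cap S$ to $\tilde\alpha(\ell)\cap S$ for every line $\ell$.

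Next, let $U\subset S^{[2]}$ be the open subset parametrizing unordered pairs $[p,q]$ such that $p\neq q$, the line $\ell_{p,q}$ joining them is not contained in $S$ (which holds because $S$ contains no lines), and the residual intersection $\ell_{p,q}\cap S=\{p,q,P,Q\}$ consists of four distinct points. Both $\alpha$ and $\beta$ restrict to regular maps $U\to S^{[2]}$; moreover $\alpha$ preserves $U$, since $\tilde\alpha$ maps the line $\ell_{p,q}$ to the line $\ell_{\alpha(p),\alpha(q)}$, and the four intersection points of $\ell_{p,q}$ with $S$ are sent bijectively to those of $\ell_{\alpha(p),\alpha(q)}$ with $S$. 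Writing this out one gets
\[
\alpha\bigl(\beta([p,q])\bigr)=\alpha([P,Q])=[\alpha(P),\alpha(Q)]
=\beta([\alpha(p),\alpha(q)])=\beta\bigl(\alpha([p,q])\bigr),
\]
so $\alpha\circ\beta=\beta\circ\alpha$ on $U$.

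Finally, both compositions $\alpha\circ\beta$ and $\beta\circ\alpha$ are regular morphisms defined on the whole of $S^{[2]}$ (Beauville's involution extends to a biregular automorphism of $S^{[2]}$ on the whole Hilbert square when $S$ is a smooth quartic without lines, cf.\ \cite{Beauville2}), and they agree on the dense open subset $U$; hence they agree everywhere on $S^{[2]}$. The only subtle point is checking that $U$ is actually dense and that $\beta$ is globally regular (and not merely rational), but both facts are established in Beauville's original construction and are not affected by the presence of $\alpha$; once they are granted, the argument above is essentially immediate.
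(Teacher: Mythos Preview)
Your proof is correct and follows essentially the same approach as the paper: both arguments rest on the observation that a polarization-preserving automorphism of $S$ extends to a projectivity of $\mathbb{P}^3$, which sends lines to lines and hence the residual pair $[P,Q]$ of $[p,q]$ to the residual pair of $[\alpha(p),\alpha(q)]$. Your version is more explicit about the density argument needed to pass from the generic locus to all of $S^{[2]}$, but the core idea is identical.
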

\begin{proof}
The automorphism $\alpha$ is induced by an automorphism of $\mathbb P^3$, that by definition maps lines into lines, so $\beta([\alpha(p),\alpha(q)])=[\alpha(P),\alpha(Q)]$. 
\end{proof}
If $\alpha$ acts as $-id$ on $\omega_S$, then $\alpha\circ\beta$ is a non-natural symplectic automorphism of $S^{[2]}$. This construction can therefore be used to describe the general member of a projective family of K3$^{[2]}$-type manifolds with a finite symplectic action. When $\alpha$ is an involution, this has been done in \cite[Rem. 2.13]{CGKK}. To obtain a non-natural symplectic action of a group of order 4 $G$, we are going to start from K3 surfaces with a mixed action, i.e. an action of $G$ such that only a proper subgroup $K\subset G$ acts symplectically. Mixed actions of finite groups on K3 surfaces have been classified in \cite{BH}.

\textbf{1: Non natural action of $\mathbb Z/4\mathbb Z$}. Quartic surfaces with equation
\begin{align*}S_4:\ &a_1x_0^4+x_0^2(a_2x_1^2+a_3x_2x_3)+x_0x_1(a_4x_2^2+a_5x_3^2)+\\&+x_1^2(a_6x_1^2+a_7x_2x_3)+x_2^2(a_8x_2^2+a_9x_3^2)+a_{10}x_3^4=0\end{align*}
are invariant for the automorphism $\gamma:(x_0, x_1, x_2, x_3)\mapsto (x_0, -x_1, ix_2, -ix_3)$, which acts as $-id$ on its symplectic form \cite[Ex. 1.2]{AS}. The general member of this family is smooth and contains no lines (this can be checked by computer on an example, as both are open conditions). Since this family of surfaces has 6 moduli, taking the Hilbert square we obtain a general member in a projective family of projective hyperk\"ahler manifolds of type K3$^{[2]}$ with a symplectic automorphism of order 4, $\beta\circ\gamma$ (see Remark \ref{S2notgeneral}).

\begin{proposition}
Let $X$ be the general member of the projective family of K3$^{[2]}$-type manifolds associated to $\tilde M_1(1)$ in Theorem \ref{familieshyperkahlerZ4}: then $X$ is the Hilbert scheme of two points on $S_4$.
\end{proposition}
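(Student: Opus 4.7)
The plan is to identify $X = S_4^{[2]}$ with the general member of the family $\tilde M_1(1)$ in Theorem \ref{familiesIHSZ4} by computing the transcendental lattice of $X$ and matching it against the entries of the table.

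First I would verify that $\tau = \beta \circ \gamma$ is a symplectic automorphism of order $4$ on $X$. Since $\beta$ and $\gamma$ commute by Proposition \ref{Beauvillecommuta}, one computes $\tau^2 = \beta^2\gamma^2 = \gamma^2$, which is the nontrivial symplectic involution induced on $S_4^{[2]}$ by $\gamma^2 \in \mathrm{Aut}(S_4)$; hence $\tau$ has order $4$. Because $\beta^* \omega_X = -\omega_X$ (non-symplectic) and $\gamma^* \omega_X = -\omega_X$ (by hypothesis on $\gamma$), their composition satisfies $\tau^* \omega_X = \omega_X$, so $\tau$ is symplectic.

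Next I would count parameters. The defining equation of $S_4$ involves $10$ projective coefficients $a_1,\dots,a_{10}$ modulo overall rescaling and modulo the $3$-dimensional torus of diagonal projectivities of $\mathbb P^3$ commuting with $\gamma$, giving $10-1-3 = 6$ moduli. By Remark \ref{moduli_space_dim}, this matches the expected dimension $21 - 15 = 6$ of a family of K3$^{[2]}$-type manifolds with a symplectic $\mathbb Z/4\mathbb Z$-action of rank $\mathrm{NS}(X) = 15$, so $\{S_4^{[2]}\}$ sweeps out a whole irreducible $6$-dimensional component inside the relevant moduli space.

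Since $T(X) = T(S_4^{[2]}) = T(S_4)$ and, among the families of Theorem \ref{familiesIHSZ4} with $d \equiv_4 1$, the three possibilities $L_0$, $M_1$, and $\tilde M_1$ are distinguished exactly by their transcendental lattice, it suffices to compute $T(S_4)$ and match it against the three choices. I would exploit that $(S_4,\gamma)$ is a K3 with non-symplectic order-$4$ automorphism and $\gamma^2$ symplectic: $\gamma$ acts as $-\mathrm{id}$ on $T(S_4)$ and as an order-$4$ isometry of $\Omega_{\gamma^2} = E_8(-2) \subset NS(S_4)$ satisfying $\gamma^2 = -1$ on this summand. Using this structure together with the explicit description of $\Lambda_{\mathrm{K3}}^{\tau}$ in Proposition \ref{invarianteZ4} and the shape of $NS(S_4)^{\gamma^2}$ coming from the invariant quartic $S_4 \subset \mathbb P^3$, one identifies $T(S_4) \cong U(4) \oplus D_1$, which is precisely the transcendental lattice of $\tilde M_1(1)$ and differs from those of $L_0(d)$ and $M_1(m)$ for every admissible $d,m$.

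The main obstacle will be the explicit determination of $T(S_4)$: one must pin down the rank-$6$ sublattice $NS(S_4)^{\gamma^2}$ containing $H$, its gluing with $E_8(-2)$ inside $NS(S_4)$, and then take the orthogonal complement inside $\Lambda_{\mathrm{K3}}$ and check that the resulting Gram matrix on a chosen basis matches $U(4) \oplus D_1$. This is combinatorial but delicate, since the discriminant comparison alone only narrows the candidates to a small finite list, and matching the Gram matrix requires tracking the $\gamma$-eigenspace decomposition of $NS(S_4)$ through the embedding into $\Lambda_{\mathrm{K3}}$.
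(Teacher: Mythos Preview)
Your overall strategy matches the paper's: establish that $\tau=\beta\circ\gamma$ is symplectic of order $4$, count moduli to see that $\{S_4^{[2]}\}$ fills a whole component, and then identify the component by computing $T(X)=T(S_4)$ and matching it against the table. The first two parts are fine.

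The genuine gap is in the third step. You assert that ``one identifies $T(S_4)\cong U(4)\oplus D_1$'', but the argument you sketch does not provide enough data to pin down this lattice. Knowing that $\gamma$ acts as $-\mathrm{id}$ on $T(S_4)$ and with $\gamma^2=-1$ on $\Omega_{\gamma^2}\cong E_8(2)$ constrains the $\gamma$-module structure, but it does not determine the isometry class of $NS(S_4)$ (or of its $\gamma$-invariant part), and hence not $T(S_4)$. Your phrase ``the shape of $NS(S_4)^{\gamma^2}$ coming from the invariant quartic $S_4\subset\mathbb P^3$'' is the crux, and it is not filled in: the hyperplane class $H$ alone gives you one generator, not the full rank-$6$ invariant sublattice, and the gluing with $E_8(2)$ is completely unspecified. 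Without this, you also have not established that $d\equiv_4 1$ (equivalently, that $NS(X)\simeq\Omega_4\oplus\langle 2d\rangle$ with no overlattice), so restricting attention to the three families $L_0,M_1,\tilde M_1$ is premature.

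The paper resolves precisely this gap by invoking the Brandhorst--Hofmann classification \cite{BH} and database \cite{BHdata}: it identifies $(S_4,\gamma)$ as the unique entry with $\gamma$ of order $4$, $\gamma^*\omega_S=-\omega_S$, and empty fixed locus (entry (1.2.7.55)), and reads off the invariant lattice $L^\gamma$ and the full $NS(S_4)$ directly. It then recognises $L^\gamma\oplus\langle-2\rangle$ as $\Omega_2^\perp$ inside $\Omega_4\oplus\langle 2\rangle$, deducing $NS(X)=\Omega_4\oplus\langle 2\rangle$, and finally computes $T(S_4)\simeq T(X)$. If you want to avoid the database, you would need to supply an independent geometric computation of $NS(S_4)$ (e.g.\ via explicit curves on the quartic), which is substantially more than what your sketch contains.
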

\begin{proof}
There are 5 deformation families of K3 surfaces with an automorphism of order 4 that acts as $-id$ on the symplectic form; each of them has a different fixed locus, and the corresponding invariant lattices have different ranks \cite[Prop. 2]{AS}. The automorphism $\gamma$ of $S_4$ induced from $\gamma$ on $\mathbb P^3$ described above has empty fixed locus, and its invariant lattice is $L^\gamma=U(4)\oplus D_4(2)$ (see \cite{BH} and the attached database \cite{BHdata}, entry (1.2.7.55)): therefore $NS(S_4)$ (which is recorded in the database) is an overlattice of $U(4)\oplus D_4(2)\oplus E_8(2)$, as $E_8(2)$ is the coinvariant lattice for the symplectic involution $\gamma^2$. \\
Since by Proposition \ref{Beauvillecommuta} $X=S_4^{[2]}$ admits the symplectic automorphism $\tau=\beta\circ\gamma$ of order 4, the lattice
$NS(X)=NS(S_4)\oplus\langle-2\rangle$ is isomorphic to $\Omega_4\oplus\langle2d\rangle$ or one of its overlattices; moreover, the involution $\tau^2$ on $X$ is induced by $\gamma^2$ on $S_4$, so $\Omega_2$ is embedded in $\Omega_4$ according to \cite[\S 3.2]{P}, and it holds $NS(X)^{\tau^2}=NS(S)^{\gamma^2}\oplus\langle -2\rangle=L^\gamma\oplus\langle-2\rangle$.\\ 
We find 
\[L^\gamma\oplus\langle-2\rangle\simeq\Omega_2^\perp\subset \Omega_4\oplus\langle2\rangle,\]
 so $NS(X)=\Omega_4\oplus\langle2\rangle$: notice that the $(-2)$-class $\mu$ that generates ${(L^\gamma\oplus\Omega_2)}^{\perp_{NS(X)}}$ cannot glue to $\Omega_2$, so $NS(S_4)=\mu^\perp$ in $\Omega_4\oplus\langle2\rangle$. Finally, knowing $NS(S_4)$ we can compute $T({S_4})\simeq T(X)$, and thus completely determine the deformation family of $S_4^{[2]}$: this is the one associated in Theorem \ref{familieshyperkahlerZ4} to $\tilde M_1(1)$.
\end{proof}

\textbf{2: Non natural action of $(\mathbb Z/2\mathbb Z)^2$}. The family of quartics
\[S_{2,2}: f_4(x_0,x_1)+x_2^2f_2(x_0,x_1)+x_3^2g_2(x_0,x_1)+\alpha x_2^2+\beta x_2x_3+\gamma x_3^2=0\]
admits a mixed action of $(\mathbb Z/2\mathbb Z)^2$, with generators $\sigma:(x_0, x_1, x_2, x_3)\mapsto (-x_0, -x_1, x_2, x_3)$ and $i:(x_0, x_1, x_2, x_3)\mapsto (x_0, x_1, -x_2, x_3)$: here $\sigma$ is symplectic, while $i$ and $\sigma\circ i$ are non-symplectic and each of them fixes a curve of genus 3 on $S_{2,2}$. The general member of this family is smooth and contains no line. This family has 8 moduli, so taking the Hilbert square we obtain a general member of a projective family of hyperk\"ahler manifolds of type K3$^{[2]}$ with a symplectic action of $(\mathbb Z/2\mathbb Z)^2=\langle\sigma, i\circ\beta\rangle$.
\begin{proposition}
The Hilbert square $X=S_{2,2}^{[2]}$ is a general member of a family of projective hyperk\"ahler of K3$^{[2]}$-type with a symplectic action of $(\mathbb Z/2\mathbb Z)^2$: in Theorem \ref{familieshyperkahlerKlein}, this is the family associated to the class $M_1(1)$.
\end{proposition}
\begin{proof} In the database \cite{BHdata} the collection of possible mixed actions of $(\mathbb Z/2\mathbb Z)^2$ on a K3 surface $S$ consists of 354 elements: there are two of them for which both non-symplectic involutions fix a curve of genus 3, the entries (1.2.9.13) and (1.2.9.21).\\
Since we know that one of the symplectic involutions on $X=S_{2,2}^{[2]}$ is natural, $\Omega_2$ is embedded in $\Omega_{2,2}$ as co-invariant lattice of one of the elements of $(\mathbb Z/2\mathbb Z)^2$. We can then compare the orthogonal complement to $\Omega_2$ in each of the lattices $S$ in Theorem \ref{familieshyperkahlerKlein}, with the  invariant lattices of the two possible actions: thus we can exclude (1.2.9.13), and find that $NS(X)=\Omega_{2,2}\oplus\langle2\rangle=\langle-2\rangle\oplus K$, where $K$ is the Néron-Severi lattice of the K3 surface (1.2.9.21): this is therefore our $S_{2,2}$. Since $T(X)\simeq T(S_{2,2})$, we can then find the deformation family which $X$ belongs to -- the one associated in Theorem \ref{familieshyperkahlerKlein} to $M_1(1)$.
\end{proof}

\textbf{Another example for $(\mathbb Z/2\mathbb Z)^2$}. 
Following \cite[\S 5A]{CGKK}, a projective family of K3$^{[2]}$-type manifolds with a symplectic involution $i$ can be realized as a double cover of a cone $C(\mathbb P^2\times\mathbb P^2)\subset\mathbb P^9$, with $i$ that acts exchanging the two copies of $\mathbb P^2$.\\
From a lattice-theoretic point of view, this model is given by a big and nef divisor $H=F_1+F_2\in NS(X)$ such that $\langle F_1,F_2\rangle=U(2)$ and $i^*F_1=F_2$. 
\begin{proposition}
The involution $i$ can never be the square of a symplectic automorphism of order 4 on $X$, but it can be one of the generators for a symplectic action of $(\mathbb Z/2\mathbb Z)^2$.
\end{proposition}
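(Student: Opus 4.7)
Suppose for contradiction that $\tau$ is a symplectic automorphism of $X$ of order $4$ with $\tau^{2}=\iota$. Since $\tau$ commutes with $\iota$, it preserves the rational $\iota$-eigenspace decomposition; writing $F_{1}=\tfrac12(H+D_{0})$ with $H=F_{1}+F_{2}\in NS(X)^{\iota}$ and $D_{0}=F_{1}-F_{2}\in\Omega_{\iota}$, I set $H':=\tau^{\ast}H$ and $D':=\tau^{\ast}D_{0}$. Then $(H')^{2}=4$, $(D')^{2}=-4$, $D_{0}\cdot D'=0$ (automatic from $\tau^{\ast 2}=-\mathrm{id}$ on $\Omega_{\iota}$), $\tau^{\ast}H'=H$ and $\tau^{\ast}D'=-D_{0}$, so the $\tau^{\ast}$-orbit of $H$ has size $1$ or $2$.

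If the orbit has size $2$, setting $F_{3}:=\tau^{\ast}F_{1}=(H'+D')/2$ and $F_{4}:=\tau^{\ast}F_{2}$ produces a second primitive $U(2)$-sublattice of $NS(X)$ which is again $\iota$-swapped. Setting $a:=F_{1}\cdot F_{3}=(H\cdot H')/4$, the negative definiteness of the $\tau$-coinvariant $\Omega_{\tau}$ applied to $H-H'$ forces $a\geq 2$; then $H+H'\in NS(X)^{\tau}$ has square $8(1+a)\geq 24$ and must be a multiple of the primitive polarization of some family in Theorem \ref{familiesIHSZ4}, while $(H-H')/2$ yields a class in the rank-$6$ $\iota$-invariant sublattice of $\Omega_{\tau}$ of square $-2(a-1)$. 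A case check against the N\'eron--Severi lattices in Theorem \ref{familiesIHSZ4}, together with the unique embedding $\Omega_{4}\hookrightarrow\Lambda_{\mathrm{K3}^{[2]}}$ from Remark \ref{unique_emb_OmegaG}, shows that no entry admits the primitive $(-4)$-class $D_{0}$ together with the required polarization class $H$ of square $4$ and the prescribed gluings. If instead the orbit has size $1$ (i.e.\ $H'=H$), the same discriminant-form bookkeeping applied to $\tau^{\ast}F_{1}=(H+D')/2$ with $D'\equiv D_{0}\pmod{2\Omega_{\iota}}$ rules out this scenario as well. The hard part is this finite but delicate enumeration: checking in each entry of Theorem \ref{familiesIHSZ4} that the square-$4$ polarization does not interact with the prescribed $(-4)$-class $D_{0}=F_{1}-F_{2}$ in the way demanded by the above analysis.

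For the second assertion I exhibit a Klein specialization explicitly. Among the entries of Theorem \ref{familiesIHSKlein} with $d=2$, the index-$2$ overlattice $(\Omega_{2,2}\oplus\langle 4\rangle)'$ polarized by $L_{2,2}^{(a,b)}(0)$ of square $4$ is defined exactly via the gluing $F_{1}=(L+D_{0})/2$ with $L=L_{2,2}^{(a,b)}(0)$ and $D_{0}\in\Omega_{\iota}\subset\Omega_{2,2}$ a primitive $(-4)$-class --- which is precisely the defining data of the $U(2)$-polarized family for $\iota$. Hence the general member of that Klein family has $NS(X)\supset U(2)=\langle F_{1},F_{2}\rangle$ with $\iota$ acting by swap, and $\iota$ occurs as one of the generators of a symplectic $(\mathbb{Z}/2\mathbb{Z})^{2}$-action. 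A concrete geometric realization is $X=S_{2,2}^{[2]}$ from the preceding subsection: the non-natural Beauville-type symplectic involution $\iota\circ\beta$ carries the $U(2)$-structure, and together with the natural involution induced by $\sigma$ generates the required Klein action.
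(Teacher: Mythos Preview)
Your proposal is a sketch rather than a proof: you explicitly flag that ``the hard part is this finite but delicate enumeration'' against Theorem~\ref{familiesIHSZ4}, and you do not carry it out. Worse, the enumeration strategy itself is ill-posed. The tables in Theorem~\ref{familiesIHSZ4} list $NS(X)$ for the \emph{general} member of each family; an $X$ carrying both a symplectic $\tau$ of order~4 and the extra $U(2)=\langle F_1,F_2\rangle$ would have strictly larger Picard rank, so its N\'eron--Severi lattice need not appear in that table at all. What you would actually need to check is a statement about the fixed isometry $\tau^\ast$ on the abstract lattice $\Lambda_{\mathrm{K3}^{[2]}}$, not about $NS(X)$ for particular families. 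Your case split into orbit sizes $1$ and $2$ for $H$ under $\tau^\ast$ is a reasonable start, but neither branch is completed, and the discriminant-form bookkeeping you allude to is never made precise.

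The paper's argument bypasses all of this. Since the symplectic action of any group of order~4 on a K3$^{[2]}$-type manifold is standard \cite{HM}, the isometry $\tau^\ast$ on $\Lambda_{\mathrm{K3}^{[2]}}=\Lambda_{\mathrm{K3}}\oplus\langle-2\rangle$ is $(\tau_S^\ast,\mathrm{id})$ for a symplectic order-4 automorphism $\tau_S$ of a K3 surface. The question of whether $\iota^\ast=\tau^{2\ast}$ can swap two classes spanning $U(2)$ therefore reduces verbatim to the K3-surface question: can $\tau_S^{2\ast}$ swap the two rulings in the double cover of $\mathbb P^1\times\mathbb P^1$? That is settled in \cite[\S 6.3 no.~3]{P} (never, for $\mathbb Z/4\mathbb Z$) and \cite[\S 4.3 no.~5]{P2} (yes, for one generator of $(\mathbb Z/2\mathbb Z)^2$). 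No enumeration is needed.

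Your second assertion also conflates two different constructions: the example $X=S_{2,2}^{[2]}$ with the Beauville involution realizes the family attached to $M_1(1)$ (so $d=1$), not the $d=2$ overlattice family you invoke, and it is unrelated to the cone-over-$\mathbb P^2\times\mathbb P^2$ model under discussion here.
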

\begin{proof}
The same lattice-theoretic condition gives for K3 surfaces a projective model as a double cover of $\mathbb P^1\times \mathbb P^1$ with a symplectic involution $i$ that exchanges the two copies of $\mathbb P^1$: if $i$ is the square of a symplectic automorphism of order 4 it always holds $i^*F_1=F_1$ (see \cite[\S 6.3 no. 3]{P}) so this model cannot be realized; if $i$ is one of the generators of $(\mathbb Z/2\mathbb Z)^2$ however this model exists (see  \cite[\S 4.3 no. 5]{P2}).
\end{proof}

\section{The induced involutions on Nikulin orbifolds}\label{sec:Nik}

\subsection{Nikulin orbifolds and the cohomology of quotients}

Let $X$ be a K3$^{[2]}$-type manifold with a symplectic involution $i$: then by \cite{Mongardi4} its fixed locus is always a K3 surface $\Sigma$ and 28 isolated points. Let $\pi: X\rightarrow X/i$ be the quotient map: blowing up $\pi(\Sigma)$ we obtain from $X/i$ an irreducible holomorphic symplectic orbifold (IHSO) $Y$ with 28 isolated singularities, which in \cite{CGKK} is called a Nikulin orbifold. \\
Nikulin-type orbifolds (which are IHSOs deformation equivalent to Nikulin orbifolds) are one of the most studied examples of irreducible symplectic varieties. Their second integral cohomology group (i.e. that of their smooth locus) is endowed with a symplectic form, which gives it a lattice structure \cite[Thm. 2.5]{Menet2}
\begin{equation}\label{eq:cohomology_Nik_orbifold}
H^2(Y,\mathbb Z)\simeq\Lambda_{\mathrm{N}}\coloneqq U(2)^{\oplus 3}\oplus E_8\oplus\langle-2\rangle^{\oplus 2}.\end{equation}
There are results about their K\"ahler cone in \cite{MR2}\footnote{
More recently, it has been proven in \cite{BMM} the maximality of the monodromy group for Nikulin-type orbifolds: this has allowed the authors to give a complete description of the K\"ahler cone, and a lattice-theoretic classification of symplectic automorphisms (both regular and birational) for the deformation type.}, and projective families of Nikulin orbifolds have been classified in \cite{CGKK}.

More generally, one can study the terminalization $W$ of any quotient of a hyperk\"ahler manifold $X$ by a finite group $G$ acting symplectically, as they provide more examples of symplectic orbifolds \cite{BGMM}.\\
The symplectic form of $W$ (which naturally comes from the symplectic form of $X$) gives a lattice structure to $H^2(W,\mathbb Z)$. It holds:
\begin{equation}\label{eq:cohomology_of_quotient} H^2(W,\mathbb Q)\simeq (\pi_*H^2(X,\mathbb Z)\oplus E)\otimes \mathbb Q,
\end{equation}
 where $E$ is generated by the exceptional divisors introduced in the terminalization. However, it is not generally true that $\pi_*H^2(X,\mathbb Z)$ is primitively embedded in $H^2(W,\mathbb Z)$, and a proof of primitivity is fundamental in Menet's work for the computation of $\Lambda_{\mathrm N}$; moreover, there might be integral classes of the form $(x+e)/n$, with $x\in\pi_*H^2(X,\mathbb Z),\ e\in E,\ n\in\mathbb Z$ (see Remark \ref{rem:cohomology_Nik_orbifold}); the lattice $H^2(W,\mathbb Z)$ might also be odd (e.g. see \cite[Thm. 1.1]{KM}).

\begin{remark}\label{rem:cohomology_Nik_orbifold}
In the description \eqref{eq:cohomology_Nik_orbifold} of $\Lambda_{\mathrm N}$, the component $\langle-2\rangle^{\oplus 2}$ is generated by $(\tilde\mu\pm\tilde\Sigma)/2$, where $\tilde\mu$ is the image through the quotient map of the class $\mu$ generator of $\Lambda_{\mathrm{K3}}^\perp$ in $\Lambda_{\mathrm{K3}^{[2]}}\simeq H^2(X,\mathbb Z)$, and $\tilde\Sigma$ is the exceptional class introduced in the terminalization $Y\rightarrow X/i$ \cite[Thm. 2.39]{Menet2}.
\end{remark}

\subsection{Natural and standard actions}

\begin{definition}[see \protect{\cite[Def. 2.3]{Mongardi5}}]
Let $S$ be a K3 surface with a symplectic action of a finite group $G$. Then $G$ acts on $S^{[2]}$, as it acts on $Sym^2(S)$ preserving the diagonal, and therefore lifts to to its blow-up in $S^{[2]}$. This action is called \emph{natural}. \\
Let $X$ be a K3$^{[2]}$-type manifold. We call $(X, G)$ a \emph{standard pair} when $(X, G)$ can be deformed to a pair $(S^{[2]}, H)$ where $H\simeq G$ and the action of $H$ is natural.
\end{definition}

The natural action of a group $G$ on $S^{[2]}$ preserves the split 
\[H^2(S^{[2]},\mathbb Z)\simeq H^2(S,\mathbb Z)\oplus\langle -2\rangle,\]
acting on $H^2(S,\mathbb Z)$ as expected, and as the identity on its orthogonal complement. 

\begin{remark}\label{std_act}
If the action of $G$ on $X$ is standard, a similar statement holds: there exists a marking $\varphi: H^2(X,\mathbb Z)\simeq\Lambda_{\mathrm{K3}^{[2]}}=\Lambda_{\mathrm{K3}}\oplus\langle -2\rangle$ such that $G$ acts as expected on the abstract K3 lattice, and as the identity on its orthogonal complement (see also Remark \ref{G_action_standard}).
\end{remark}

Let $i_S$ be a symplectic involution on a K3 surface $S$, consider the quotient $\pi_S: S\rightarrow S/i_S$, and $\tilde S$ its terminalization. The lattice 
$\pi_{S*}H^2(S,\mathbb Z)$ is primitive in $H^2(\tilde S,\mathbb Z)$, and it is isomorphic to
\[\tilde\Lambda\coloneqq U(2)^{\oplus 3}\oplus E_8.\]
Now fix the marking $\phi:H^2(S,\mathbb Z)\simeq\Lambda_{\mathrm{K3}}$, and call $\pi_{S*}$ also the quotient map of the abstract lattice. As a symplectic involution $i$ on a K3$^{[2]}$-type manifold $X$ is always standard \cite{Mongardi4}, there exist a marking $\varphi$ as in Remark \ref{std_act}, and a unique marking $\hat\varphi$ induced by it such that the following square commutes:
\begin{equation}\label{diag_quot}
\xymatrix{
H^2(X,\mathbb Z)\ar[r]^\varphi \ar[d]_{\pi_*} & \Lambda_{\mathrm{K3}}\oplus\langle -2\rangle\ar[d]^{\pi_{S*}\oplus (\cdot 2)}\\
\pi_*H^2(X,\mathbb Z)\ar[r]_{\hat\varphi} &\tilde\Lambda\oplus\langle-4\rangle.}\end{equation}

\begin{lemma}\label{lem:cohomology_Nik_orbifold}
Let $G$ be a group containing a normal involution $i$ and acting symplectically on a K3$^{[2]}$-type manifold $X$, and assume the action is standard. Then $G/\langle i\rangle$ acts symplectically on $H^2(Y,\mathbb Z)$, and there is a marking of $Y$
\[H^2(Y,\mathbb Z)\simeq \tilde\Lambda\oplus \langle -2\rangle^{\oplus 2},\]
such that $G/\langle i\rangle$ acts as the identity on $\langle -2\rangle^{\oplus 2}$, and acts on its orthogonal complement as it does on $\pi_{S*}\Lambda_{\mathrm{K3}}$.
\end{lemma}
\begin{proof}
Consider the same diagram \eqref{diag_quot}, assuming now that $G$ acts symplectically on $S$. Then $G/\langle i\rangle$ acts on $\tilde\Lambda\oplus\langle -4\rangle$ preserving $\tilde\Lambda^\perp$. The
marking in the statement can be obtained from $\hat\varphi$ (see Remark \ref{rem:cohomology_Nik_orbifold}).
\end{proof}

If $G$ has order 4 and $X$ is of K3$^{[2]}$-type, and $Y$ is the Nikulin orbifold obtained as terminalization of $X/i$ (when $G$ is cyclic, $i$ is the square of the generator, otherwise it's any involution), then $Y$ has a residual symplectic involution $\iota$ induced by $G/\langle i\rangle$. Consider the quotient map $\pi_\iota:Y\rightarrow Y/\iota$, and $W$ the terminalization of the quotient. 
Then, we can use our knowledge of the action of $G$ on $\Lambda_{\mathrm{K3}}$ from \cite{P,P2} to give some partial results about the cohomology of $W$: by Remark \ref{G_action_standard}, we can take as $\pi_S$ in Lemma \ref{lem:cohomology_Nik_orbifold} one of the quotient maps introduced for K3 surfaces with a symplectic action of $G$: $\pi_{2*}$ if $G$ is cyclic \cite[\S 4.1]{P}, or $\pi_{\tau*},\pi_{\varphi*}$ otherwise \cite[\S 2.1, 2.2]{P2}. 
\begin{remark}
Through the push-pull map, we have some information about the primitivity of $\pi_{\iota*}x$, for $x\in H^2(Y,\mathbb Z)$: indeed, for a primitive invariant class $x=\iota(x)$, $\pi_{\iota*}x$ will be twice a primitive class in $H^2(W,\mathbb Z)$ if and only if $x=y+y'$ with $y$ primitive such that $y\neq y'=\iota(y)$. 
\end{remark}

\subsubsection{The case $G=\mathbb Z/4\mathbb Z$}

\begin{lemma}\label{quotientmapZ4hyperkahler}
Let $\tau$ be the generator of $G$, and consider the maps $\pi_2: S\rightarrow S/\tau^2$, $\pi_4: S\rightarrow S/\tau$. Recall the description of the invariant lattice $\Lambda_{\mathrm{K3}}^\tau$ and its basis from \eqref{marking_invariant_Z4}: then, it holds $\pi_{4*}\Lambda_{\mathrm{K3}}=\pi_{4*}\Lambda_{\mathrm{K3}}^\tau$. Denoting $\hat\star\coloneqq\pi_{2*}\star,\ \overline\star\coloneqq\pi_{4*}\star$, it holds:
\begin{align*}
\pi_{2*}(U\oplus\langle-2\rangle^{\oplus 2}\oplus U(4)^{\oplus 2})=& \ U(2)\oplus\langle-4\rangle^{\oplus 2}\oplus U(2)^{\oplus 2}=\\
& \ \langle\hat s_1,\hat s_2\rangle\oplus\langle\hat w_1,\hat w_2\rangle\oplus\langle\hat w_3/2,\dots,\hat w_6/2\rangle,\\[6pt]
\pi_{4*}(U\oplus\langle-2\rangle^{\oplus 2}\oplus U(4)^{\oplus 2})=& \ U(4)\oplus\langle-2\rangle^{\oplus 2}\oplus U^{\oplus 2}=\\
&\ \langle\overline s_1,\overline s_2\rangle\oplus\langle\overline w_1/2,\overline w_2/2\rangle\oplus\langle\overline w_3/4,\dots,\overline w_6/4\rangle.
\end{align*}
Moreover, it holds $\pi_{4*}=(\tilde\pi\circ\pi_{2})_*$, where $\tilde\pi:S/\tau^2\rightarrow S/\tau$.
\end{lemma}
\begin{proof}
We apply the maps $\pi_{2*}$ and $\pi_{4*}$ described in \cite[\S 4.1]{P} to the invariant lattice $\Lambda_{\mathrm {K3}}^\tau$, after a change of basis to get from its description in \eqref{marking_invariant_Z4} to the one in \cite[\S 3.2]{P}.
\end{proof}
\begin{proposition}\label{cohomology_quotient_Z4}
Let $X$ be a K3$^{[2]}$-type manifold with a symplectic action of $G=\mathbb Z/4\mathbb Z$. The terminalization $W$ of the quotient $X/G$ is a primitive symplectic orbifold, $\pi_1(W_{reg})=\mathbb Z/2\mathbb Z$, and $H^2(W, \mathbb Z)$ is an overlattice of finite index (possibly 1) of $U^{\oplus 2}\oplus U(4)\oplus\langle -2\rangle^{\oplus 2}\oplus\langle -4\rangle^{\oplus 2}$.
\end{proposition}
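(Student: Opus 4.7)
The plan is to prove the three assertions in turn using the factorization $X \to X/\tau^2 \leftarrow Y \to Y/\iota \leftarrow W$, where $Y$ is the Nikulin orbifold obtained as terminalization of $X/\tau^2$ and $\iota$ is the residual involution induced by $\tau$.

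The first assertion is standard: the quotient of the IHS manifold $X$ by the symplectic $G$-action has symplectic singularities, and $W$, being its terminalization, is by definition a primitive symplectic orbifold. For the fundamental group, observe that $\tau^2$ has a K3 surface $\Sigma$ as its codimension-$2$ fixed component (Proposition \ref{fixZ4onIHS}), which is replaced by a ramification divisor under the terminalization $Y \to X/\tau^2$; the local monodromy around this divisor is $\tau^2$, hence $\tau^2$ is killed in $\pi_1(W_{reg})$. The residual involution $\iota$ on $Y$ has only isolated fixed points, so no new divisor is contributed in the last terminalization step and the image of $\tau$ remains nontrivial. Combined with $\pi_1(X)=1$, this yields $\pi_1(W_{reg})=G/\langle\tau^2\rangle=\mathbb Z/2\mathbb Z$.

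For the cohomology lattice, by Remark \ref{unique_emb_OmegaG} and standardness of the $G$-action we identify $H^2(X,\mathbb Z)^G = \Lambda_{\mathrm{K3}}^\tau \oplus \langle -2\rangle$, with the second summand lying in the $\langle -2\rangle$ factor of $\Lambda_{\mathrm{K3}^{[2]}}$. Lemma \ref{quotientmapZ4IHS} then gives $\pi_{4*}(\Lambda_{\mathrm{K3}}^\tau) = U^{\oplus 2} \oplus U(4) \oplus \langle -2\rangle^{\oplus 2}$ inside $H^2(W,\mathbb Z)$. For the generator $\mu$ of the second summand, its pushforward to $Y$ is the class $\tilde\mu$ of square $-4$; together with the exceptional class $\tilde\Sigma$ introduced by $Y \to X/\tau^2$, it spans the summand $\langle -2\rangle^{\oplus 2} \subset \Lambda_{\mathrm{N}}$ generated by $(\tilde\mu \pm \tilde\Sigma)/2$ (Remark \ref{rem:cohomology_Nik_orbifold}). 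Both generators are $\iota$-invariant ($\tilde\mu$ because $\mu$ is $G$-invariant, $\tilde\Sigma$ because $\tau$ commutes with $\tau^2$ and hence preserves $\Sigma$ setwise), so the further pushforward to $W$ sends them to two mutually orthogonal classes of square $-4$, contributing a $\langle -4\rangle^{\oplus 2}$ summand. A rank count then shows that the assembled embedding $U^{\oplus 2} \oplus U(4) \oplus \langle -2\rangle^{\oplus 2} \oplus \langle -4\rangle^{\oplus 2} \hookrightarrow H^2(W,\mathbb Z)$ is of finite index.

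The main obstacle is the last step: ruling out additional divisorial contributions from the terminalization $W \to Y/\iota$, so that no further summand is needed in the picture above. This hinges on the facts that $\iota$ fixes only isolated points (Proposition \ref{fixZ4onIHS}) and that the $28$ pre-existing orbifold points of $Y$ are either fixed or paired up by $\iota$, so they give rise only to further isolated orbifold points of $W$ and no new divisor classes. Sharpening ``finite index'' to ``equality'' appears to be beyond the present techniques and is explicitly flagged as open in the remark preceding Section~2.2.
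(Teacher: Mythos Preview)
Your argument is correct and follows essentially the same strategy as the paper: factor through the Nikulin orbifold $Y$, note that the residual involution $\iota$ fixes only points so the last terminalization adds no divisor, and compute the image lattice as $\pi_{4*}\Lambda_{\mathrm{K3}}^\tau$ together with the $\langle-4\rangle^{\oplus 2}$ coming from $(\tilde\mu\pm\tilde\Sigma)/2$. The paper simply cites \cite[Prop.~7.1]{BGMM} for the first two assertions, whereas you unfold the $\pi_1$ computation explicitly; that is fine and arguably more informative.

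One small inaccuracy: saying that $W$ is ``by definition'' a primitive symplectic orbifold is not correct. Having symplectic singularities and taking a terminalization does not automatically yield a \emph{primitive} symplectic orbifold; primitivity is an extra condition (on $h^{2,0}$ and on the fundamental group of the regular locus), and it is precisely here that the paper invokes the fact that $\tau$ has fixed points (Proposition~\ref{fixZ4onIHS}) together with \cite[Prop.~7.1]{BGMM}. Your later $\pi_1$ discussion actually contains the relevant input, so the argument is there, just misattributed to a definition. Also, the rank count in your penultimate paragraph logically depends on the ``no new divisors'' fact established only in your last paragraph; you may want to reorder.
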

\begin{proof}
By Proposition \ref{fixZ4onhyperkahler} we know that $\tau$ fixes only isolated points on $X$, so the terminalization of the quotient is a primitive symplectic orbifold, of which we can compute the fundamental group of the regular locus by \cite[Prop. 7.1]{BGMM}. Let $Y$ be the Nikulin orbifold that arises as terminalization of $X/\tau^2$: since the involution $\iota$ induced by $G/\tau^2$ on the Nikulin orbifold $Y$ does not fix any surface, the terminalization $W\rightarrow Y/\iota$ does not introduce any new divisor. We can therefore use the push-pull formula to compute the intersection form of $\pi_{\iota*}H^2(Y,\mathbb Z)$, where $\pi_\iota: Y\rightarrow Y/\iota$. Up to the choice of marking, $H^2(Y,\mathbb Z)\simeq\pi_{2*}\Lambda_{\mathrm{K3}}\oplus\langle-2\rangle^{\oplus 2}$, and 
\[\pi_{\iota*}\big(\pi_{2*}\Lambda_{\mathrm{K3}}\oplus\langle-2\rangle^{\oplus 2}\big)=\pi_{4*}\big(\Lambda^\tau_{\mathrm{K3}}\big)\oplus\pi_{\iota*}\big(\langle-2\rangle^{\oplus 2}\big);\]
since the generators $(\tilde\mu\pm\tilde\Sigma)/2$ of $\langle-2\rangle^{\oplus 2}$ are invariant, the intersection form of their image in the quotient is $\langle-4\rangle^{\oplus 2}$. Therefore, by Lemma \ref{quotientmapZ4hyperkahler}, $\pi_{\iota*}H^2(Y,\mathbb Z)\simeq U(4)\oplus\langle -2\rangle^{\oplus 2}\oplus  U^{\oplus 2}\oplus\langle -4\rangle^{\oplus 2}$, which by  \eqref{eq:cohomology_of_quotient} is a sublattice of $H^2(W,\mathbb Z)$.
\end{proof} 

\subsubsection{The case $G=(\mathbb Z/2\mathbb Z)^2$}

Consider now $G=(\mathbb Z/2\mathbb Z)^2=\langle\tau,\varphi\rangle$. Having fixed a basis of $\Lambda_{\mathrm{K3}}^G$ as in \eqref{marking_invariant_Z22}, the actions of $\tau^*,\varphi^*$ have different descriptions (see  \cite[\S2.1, \S2.2]{P2}). To avoid an excess of new notation, we're going to give an explicit description only for the total quotient map $\pi_{2,2*}$ (see \cite[\S 2.4]{P2})\footnote{In loc.cit. $\pi_{2,2*}$ is defined such that the image in the cohomology of the terminalization  $\tilde S$ of $S/G$ is \emph{not} primitive. We take here directly its primitive saturation in $H^2(\tilde S,\mathbb Z)$, as we do there eventually.}.

\begin{lemma}\label{quotientmapKleinhyperkahler}
\begin{align*}
\pi_{2,2*}(U\oplus U(2)^{\oplus 2}\oplus D_4(2))=& \ \langle2\rangle\oplus\langle-2\rangle\oplus U(2)^{\oplus 2}\oplus D_4=\\
& \ \langle (\overline s_1+\overline s_2)/2,\ (\overline s_1-\overline s_2)/2\rangle\oplus\langle\overline u_1/2,\dots, \overline u_4/2\rangle\oplus\langle\nu_1,\dots \nu_4\rangle,
\end{align*}
where $\nu_1=(-\overline m_1-\overline m_2)/4,\enspace\nu_2=(\overline m_1-\overline m_2)/4,\enspace\nu_3=(\overline m_2+\overline m_4)/4,\enspace\nu_4=(\overline m_3)/2+(\overline m_1+\overline m_2)/4$.
\end{lemma}

\begin{proposition}
Let $X$ be a K3$^{[2]}$-type manifold with a symplectic action of $G=(\mathbb Z/2\mathbb Z)^2$. The terminalization $W$ of the quotient $X/G$ is an IHSO, and $H^2(W,\mathbb Z)$ is a lattice of rank 14 containing the lattice $U(2)^{\oplus 2}\oplus D_4\oplus\langle 2\rangle\oplus\langle -2\rangle\oplus\langle -4\rangle^{\oplus 2}$.
\end{proposition}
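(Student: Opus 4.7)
The plan is to build $W$ in two stages, mirroring the $\mathbb Z/4\mathbb Z$ case of Proposition \ref{cohomology_quotient_Z4}. Write $\pi_\tau\colon X\to X/\tau$ and let $Y$ be its terminalization, the Nikulin orbifold associated with the symplectic involution $\tau$. Then $G/\langle\tau\rangle$ induces on $Y$ a symplectic involution $\iota$, and $W$ is the terminalization of $Y/\iota$. By Proposition \ref{fixKleinonIHS}, the subgroup $\langle\tau\rangle$ already has a $12$-point isolated fixed locus on $X$, so in particular the terminalization $Y$ is a genuine Nikulin orbifold, and $\iota$ inherits isolated fixed points on $Y$ (the images of the $12$ $G$-fixed points of $X$); hence by the standardness of the action and the criterion of \cite[Prop.\ 7.1]{BGMM} used in the proof of Proposition \ref{cohomology_quotient_Z4}, $W$ is again a primitive symplectic orbifold, i.e.\ an IHSO.

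I would then identify the fixed locus of $\iota$ on $Y$. Because $G$ is abelian, $\tau$ preserves $\Sigma_\varphi$ and $\Sigma_\rho$ setwise, so both images under $\pi_\tau$ are codimension-$2$ subvarieties of $Y$; moreover $\varphi$ and $\rho=\tau\varphi$ project to the same element $\iota\in G/\tau$, so each of these two surfaces is fixed pointwise by $\iota$. As in Proposition \ref{fixKleinonIHS}, the rest of $\mathrm{Fix}(\iota)$ is isolated. Hence the terminalization $W\to Y/\iota$ introduces exactly two new exceptional divisors (one over the image of each fixed surface), contributing two linearly independent classes to $H^2(W,\mathbb Z)$.

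To compute $\pi_{\iota*}H^2(Y,\mathbb Z)$ I would use Remark \ref{rem:cohomology_Nik_orbifold}, writing $H^2(Y,\mathbb Z)=\pi_{\tau*}\Lambda_{\mathrm{K3}}\oplus\langle(\tilde\mu\pm\tilde\Sigma)/2\rangle$. Since $\tilde\Sigma$ comes from the $\tau$-fixed surface $\Sigma_\tau$, which is preserved by $G$, the pullback $\iota^*$ acts as the identity on the second summand, and $\pi_{\iota*}$ doubles its intersection form to yield $\langle -4\rangle^{\oplus 2}$. On $\pi_{\tau*}\Lambda_{\mathrm{K3}}$, $\iota$ acts as the residual involution coming from $G/\tau$, whose invariant part is $\pi_{\tau*}(\Lambda_{\mathrm{K3}}^G)$; the composition $\pi_{\iota*}\circ\pi_{\tau*}$ restricted to $\Lambda_{\mathrm{K3}}^G$ coincides with $\pi_{2,2*}$, and Lemma \ref{quotientmapKleinIHS} identifies (an overlattice of index $2$ of) its image with $\langle 2\rangle\oplus\langle -2\rangle\oplus U(2)^{\oplus 2}\oplus D_4$. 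Combining the two summands, $\pi_{\iota*}H^2(Y,\mathbb Z)$ contains $\langle 2\rangle\oplus\langle -2\rangle\oplus U(2)^{\oplus 2}\oplus D_4\oplus\langle -4\rangle^{\oplus 2}$, of rank $12$, and adding the two exceptional divisor classes gives a rank-$14$ sublattice of $H^2(W,\mathbb Z)$, as required.

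The main obstacle is bookkeeping of the two index-$2$ subtleties: passing from $\pi_{2,2*}\Lambda_{\mathrm{K3}}^G$ to the genuine image of $H^2(Y,\mathbb Z)^{\iota}$ under $\pi_{\iota*}$ (Lemma \ref{quotientmapKleinIHS} only provides the primed overlattice), and checking that no further gluing between the two new exceptional classes and $\pi_{\iota*}H^2(Y,\mathbb Z)$ is forced before one records the containment. Since the statement asserts only containment of the prescribed lattice and the rank, these gluings (and thus the precise isomorphism class of $H^2(W,\mathbb Z)$) can be deferred; what is actually needed is the primitive embedding of $\langle 2\rangle\oplus\langle -2\rangle\oplus U(2)^{\oplus 2}\oplus D_4\oplus\langle -4\rangle^{\oplus 2}$ and the independence of the two new divisor classes, both of which follow from the analysis above.
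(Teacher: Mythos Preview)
Your approach is essentially the paper's: factor through the Nikulin orbifold $Y$, identify the two surfaces fixed by the induced involution $\iota$ as the images of $\Sigma_\varphi$ and $\Sigma_\rho$ (hence two new exceptional divisors in the terminalization), and compute the pushforward of $H^2(Y,\mathbb Z)$ via Remark \ref{rem:cohomology_Nik_orbifold} and Lemma \ref{quotientmapKleinIHS}. The lattice and rank bookkeeping match the paper's sketch.

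There is, however, a genuine gap in your justification that $W$ is an IHSO. You invoke ``the criterion of \cite[Prop.\ 7.1]{BGMM} used in the proof of Proposition \ref{cohomology_quotient_Z4}'', but that argument only concluded that the terminalization is a \emph{primitive symplectic orbifold} with $\pi_1(W_{\mathrm{reg}})=\mathbb Z/2\mathbb Z$, not an IHSO; the presence of isolated fixed points is not what forces simple connectedness of the regular locus. The paper's argument is different and direct: $G=(\mathbb Z/2\mathbb Z)^2$ is generated by symplectic involutions each of which fixes a \emph{surface} on $X$ (codimension-$2$ fixed locus), and it is this feature that, via \cite[Prop.\ 7.1]{BGMM}, yields an IHSO rather than merely a primitive symplectic orbifold. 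Incidentally, your sentence ``the subgroup $\langle\tau\rangle$ already has a $12$-point isolated fixed locus on $X$'' misstates Proposition \ref{fixKleinonIHS}: $\tau$ fixes the K3 surface $\Sigma_\tau$ and $28$ isolated points; the $12$ points are the fixed locus of the full group $G$. Replacing your IHSO paragraph with the codimension-$2$ argument fixes both issues.
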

\begin{proof}
Since $G$ is generated by symplectic involutions, each fixing a surface on $X$, the terminalization of the quotient is an IHSO \cite[Prop. 7.1]{BGMM}. Let $Y$ be the Nikulin orbifold that arises as terminalization of the quotient of $X$ by any involution of $G$, suppose $\tau$. The involution $\iota$ induced on the Nikulin orbifold $Y$ fixes two surfaces, the image in $Y$ of the surfaces $\Sigma_\rho, \Sigma_\varphi$: indeed, $\tau$ acts on each of them, so they are not identified in the quotient. Therefore, the terminalization $W\rightarrow Y/\iota$ introduces two new divisors. The image of $H^2(X,\mathbb Z)$ via the quotient map $\pi_{2,2*}$ is computed similarly to Proposition \ref{cohomology_quotient_Z4}.
\end{proof}

\subsection{Induced involutions on Nikulin orbifolds}

In this section, we are going to describe the action of the induced involutions on $\Lambda_{\mathrm{N}}$ \eqref{eq:cohomology_Nik_orbifold}. At first, we take $Y$ a Nikulin orbifold arising as terminalization of $X/i$, where $X$ is a K3$^{[2]}$-type manifold and $i\in G$, a group of order 4 acting symplectically on $X$. In analogy to standard actions on K3$^{[2]}$-type manifolds, we introduce the notion of \emph{standard} involution on Nikulin-type orbifolds, and find lattice-theoretic conditions for its existence using the Torelli theorem for IHSOs \cite[Thm. 1.1]{MR1}.

\begin{proposition}\label{cor:inducedbyZ4}
The invariant and co-invariant lattices for the involution $\iota$ induced on $Y$ by $G=\mathbb Z/4\mathbb Z$ are:
\[
H^2(Y,\mathbb Z)^{\iota}= \ U(2)^{\oplus 3}\oplus \langle -4\rangle^{\oplus 2}\oplus \langle -2\rangle^{\oplus 2};\quad
{(H^2(Y,\mathbb Z)^{\iota})}^\perp= \ D_6(2).
\]
The invariant and co-invariant lattices for the involution $\iota$ induced on $Y$ by $G=(\mathbb Z/2\mathbb Z)^2$ are:
\[H^2(Y,\mathbb Z)^{\iota}= \ U^{\oplus 2}\oplus U(2)\oplus D_4(2)\oplus\langle-2\rangle^{\oplus 2};\quad
{(H^2(Y,\mathbb Z)^{\iota})}^\perp= \ D_4(2).\]
\end{proposition}
\begin{proof}
By Lemma \ref{lem:cohomology_Nik_orbifold} and Remark \ref{rem:cohomology_Nik_orbifold}, the induced involutions act on $\langle -2\rangle^{\oplus 2}$ as the identity, and on its orthogonal complement $\tilde\Lambda$ as the residual involutions described for K3 surfaces in \cite[\S 4.1]{P} for $G=\mathbb Z/4\mathbb Z$, in \cite[\S 2.3, \S 2.5]{P2} for $G=(\mathbb Z/2\mathbb Z)^2$.
\end{proof}

\begin{definition}\label{standard_involution_on_orbifold} Let $N$ be a Nikulin-type orbifold. We call an involution $\tilde\iota$ \emph{standard} if the pair $(N,\tilde\iota)$ can be deformed to a pair $(Y, \iota)$, where $Y$ is the terminalization of $X/i$ for some K3$^{[2]}$-type manifold $X$ with a symplectic action of a group $G$ of order 4, $i\in G$ is a symplectic involution, and $G/\langle i\rangle=\langle\iota\rangle$.
\end{definition}

\begin{remark}
The lattice $\Lambda_{\mathrm N}$ can be obtained as overlattice of $D_6(2)\oplus(U(2)^{\oplus 3}\oplus\langle-4\rangle^{\oplus 2}\oplus\langle-2\rangle^{\oplus 2})$ in more than one way: in other words, the embedding $D_k(2)\hookrightarrow\Lambda_{\mathrm N}$ is not determined by its orthogonal complement. The correct gluing between invariant and co-invariant lattices is equivariant for the group action: in our case, it is along a certain subgroup of the discriminant form of $D_k(2)$ isomorphic to $(\mathbb Z/2\mathbb Z)^k$, and is given in Lemma \ref{incollamenti_involuzioni_indotte} below; there are other possible gluings of these sublattices that give $\Lambda_{\mathrm N}$ as a result (involving elements of order 4 in the discriminant groups), but they do not give an involution on $\Lambda_{\mathrm N}$ .
\end{remark}

\begin{lemma}\label{incollamenti_involuzioni_indotte}
Let $N$ be a Nikulin-type orbifold which admits a standard symplectic involution $\iota$. Then one of the following holds:
\begin{enumerate}
\item the gluing between $H^2(N,\mathbb Z)^{\iota}\simeq U(2)^{\oplus 3}\oplus\langle -4\rangle^{\oplus 2}\oplus\langle-2\rangle^{\oplus 2}$ and ${(H^2(N,\mathbb Z)^{\iota})}^\perp\simeq D_6(2)$ which gives 
$H^2(N,\mathbb Z)$ as overlattice is obtained by adding as generators the following elements: $(d_5+d_6+u_1+u_2+v_2)/2,\ (d_5+a_1+u_1+v_2)/2,\ (d_1+d_4+d_6+a_1)/2,\ (d_3+d_6+v_1)/2,\ (d_2+d_4+d_6+a_2)/2,\ (d_1+d_3+a_2+v_2)/2$; here $\{d_1,\dots,d_6\}$ are the generators of $D_6(2)$ numbered so that $d_id_3=2$ for $i=1,2,4$, $\{u_1,u_2\}$ and $\{v_1,v_2\}$ are the generators of two of the copies of $U(2)$, $\{a_1,a_2\}$ of $\langle -4\rangle^{\oplus 2}$; this corresponds to the case where $\iota$ is induced by $\mathbb Z/4\mathbb Z$;
\item the gluing between $H^2(N,\mathbb Z)^{\iota}\simeq U^{\oplus 2}\oplus U(2)\oplus D_4(2)\oplus\langle-2\rangle^{\oplus 2}$ and ${(H^2(N,\mathbb Z)^{\iota})}^\perp\simeq D_4(2)$ which gives 
$H^2(N,\mathbb Z)$ as overlattice is obtained by adding as generators the following elements: $
(d_1+d_2+e_2+e_4)/2,\ (d_2+d_3+e_4)/2,\ (d_2+e_1+e_3+e_4)/2,\ (d_4+e_2+e_3+e_4)/2$;  here $\{d_1,\dots,d_4\}$ and $\{e_1,\dots,e_4\}$ are the generators of the two copies of $D_4(2)$, and $d_3,e_3$ are the center of the respective Dynkin diagram; this corresponds to the case where $\iota$ is induced by $(\mathbb Z/2\mathbb Z)^2$.
\end{enumerate}
\end{lemma}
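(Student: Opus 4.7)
My plan is to compute the gluing directly from the matrix realizations of the two induced involutions already produced in Propositions~\ref{inducedbyZ4} and~\ref{inducedbyKlein}. In the standard basis of $\Lambda_{\mathrm N}\simeq\langle-2\rangle^{\oplus 2}\oplus U(2)^{\oplus 3}\oplus E_{8}$ used there, I would first solve $(\iota^{*}-\mathrm{id})v=0$ and $(\iota^{*}+\mathrm{id})v=0$ over $\mathbb Z$ to obtain explicit $\mathbb Z$-bases of $H^{2}(N,\mathbb Z)^{\iota}$ and of its orthogonal complement inside $\Lambda_{\mathrm N}$. By Corollaries~\ref{cor:inducedbyZ4} and~\ref{cor:inducedbyKlein} these sublattices have known isometry type, so next I would fix an isometry from the explicit anti-invariant sublattice to the abstract model $D_{6}(2)$ in case~(1) and $D_{4}(2)$ in case~(2), and similarly for the invariant sublattice; this labels the generators $d_{i},u_{i},v_{i},a_{i}$ or $d_{i},e_{i}$ as in the statement.

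Once these identifications are pinned down, determining the gluing amounts to computing the image of the natural injection of $\Lambda_{\mathrm N}/\bigl(H^{2}(N,\mathbb Z)^{\iota}\oplus(H^{2}(N,\mathbb Z)^{\iota})^{\perp}\bigr)$ into the direct sum of the discriminant forms of the two summands. A set of coset representatives is produced by picking $v\in\Lambda_{\mathrm N}$ with $(v\pm\iota^{*}v)/2$ not in $H^{2}(N,\mathbb Z)^{\iota}\oplus(H^{2}(N,\mathbb Z)^{\iota})^{\perp}$; the pair $\bigl((v+\iota^{*}v)/2,\,(v-\iota^{*}v)/2\bigr)$, written in the chosen bases, produces a glue vector. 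The number of independent glue vectors is fixed by
\[
\mathrm{disc}(H^{2}(N,\mathbb Z)^{\iota})\cdot\mathrm{disc}\bigl((H^{2}(N,\mathbb Z)^{\iota})^{\perp}\bigr)=\bigl[\Lambda_{\mathrm N}:H^{2}(N,\mathbb Z)^{\iota}\oplus(H^{2}(N,\mathbb Z)^{\iota})^{\perp}\bigr]^{2}\cdot\mathrm{disc}(\Lambda_{\mathrm N}),
\]
which combined with $\mathrm{disc}(\Lambda_{\mathrm N})=2^{8}$ yields index $2^{6}$ in case~(1) and $2^{4}$ in case~(2), matching exactly the $6$ and $4$ generators listed.

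The principal obstacle is that the isometries chosen in the first step are not canonical: different identifications of the explicit invariant and anti-invariant sublattices of $\Lambda_{\mathrm N}$ with their abstract models produce different-looking formulas for what is intrinsically the same subgroup of the direct sum of discriminant forms. I would therefore fix the identifications consistently (matching $(-2)$-classes with $(-2)$-classes, respecting the orientation of the hyperbolic blocks, and fixing an orthogonal root basis of $D_{k}(2)$) so that the glue vectors come out in the prescribed form. The final $\mathbb F_{2}$-linear-algebra verification is finite and naturally performed with SageMath~\cite{sage}; as a by-product it confirms that, among the inequivalent overlattices of the orthogonal sum of invariant and anti-invariant parts, the one produced by our glue vectors is $\Lambda_{\mathrm N}$ rather than the alternative ones (gluing along $(\mathbb Z/4\mathbb Z)^{2}\times(\mathbb Z/2\mathbb Z)^{2}$ or $(\mathbb Z/4\mathbb Z)^{2}$) highlighted in the remark preceding the statement.
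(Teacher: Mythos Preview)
Your proposal is correct and matches the paper's approach: the lemma is stated without an explicit proof, being a direct computation from the matrix realizations of $\iota^{*}$ in Propositions~\ref{inducedbyZ4} and~\ref{inducedbyKlein}, and your plan of extracting the invariant/anti-invariant sublattices from those matrices, identifying them with the abstract models of Corollaries~\ref{cor:inducedbyZ4} and~\ref{cor:inducedbyKlein}, and then reading off the glue vectors is exactly the implicit argument. Your discriminant-index check and the SageMath verification are the appropriate sanity checks.
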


\begin{remark}\label{stellina}
We say that an embedding $\varphi: D_6(2)\hookrightarrow \Lambda_\mathrm{N}$  satisfies the condition ($\star$)
if the gluing between $\varphi(D_6(2))$  and its orthogonal complement is as in Lemma \ref{incollamenti_involuzioni_indotte}.1. \\
Similarly, we say that $\psi: D_4(2)\hookrightarrow \Lambda_\mathrm{N}$ satisfies the condition ($\star$) if the gluing between $\psi(D_4(2))$ and its orthogonal complement is as in Lemma \ref{incollamenti_involuzioni_indotte}.2.
\end{remark}

Let now $\mathcal M$ be the moduli space of marked Nikulin-type orbifolds (see \cite[\S 2.2]{MR2}), and let $\mathcal M_0$ be the connected component of a Nikulin orbifold $Y$ with an induced symplectic involution.

\begin{theorem}\label{existence_of_standard_involution_on_Y} 
Let $N$ be a Nikulin-type orbifold such that there is an embedding of $D_k(2)$ in $NS(N)$, with $k$ either 4 or 6, such that the induced embedding $\varphi: D_k(2)\hookrightarrow H^2(N, \mathbb Z)$ satisfies the condition ($\star$), and assume $N\in \mathcal M_0$: then $N$ admits a standard symplectic involution.
\end{theorem}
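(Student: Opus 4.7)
The plan is to use the lattice-theoretic data to construct a candidate Hodge isometry of $H^2(N,\mathbb Z)$ of order two, invoke the Torelli theorem for IHSOs \cite[Thm. 1.1]{MR1} to realize it as an automorphism $\tilde\iota$ of $N$, and then exhibit a deformation of the pair $(N,\tilde\iota)$ to a pair $(Y,\iota)$ of the induced type.

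First, given the embedding $\varphi:D_k(2)\hookrightarrow H^2(N,\mathbb Z)$ satisfying $(\star)$, I would define a lattice isometry $\Phi$ of $H^2(N,\mathbb Z)$ as $-\mathrm{id}$ on $\varphi(D_k(2))$ and $+\mathrm{id}$ on $\varphi(D_k(2))^\perp$. The crucial point is that condition $(\star)$ fixes the gluing between $\varphi(D_k(2))$ and its orthogonal complement exactly as it appears for the model pair $(Y,\iota)$ of Corollaries \ref{cor:inducedbyZ4}--\ref{cor:inducedbyKlein}; hence $\Phi$ extends from the direct sum $\varphi(D_k(2))\oplus\varphi(D_k(2))^\perp$ to the overlattice $H^2(N,\mathbb Z)\simeq\Lambda_{\mathrm N}$, and the invariant/co-invariant sublattices of $\Phi$ coincide with those of the induced involution on $H^2(Y,\mathbb Z)$. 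Since $\varphi(D_k(2))\subseteq NS(N)$, the transcendental lattice $T(N)$ lies entirely in the $+1$-eigenspace, so $\Phi$ preserves $H^{2,0}(N)$ and the Hodge decomposition, i.e. it is a symplectic Hodge isometry.

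Next I would turn $\Phi$ into an automorphism via Torelli \cite[Thm. 1.1]{MR1}. The statement requires $\Phi$ to be a parallel-transport operator preserving the Kähler cone. Parallel transport follows once we check that $\Phi$ lies in the monodromy group: this is handled by exhibiting $\Phi$ on the model $Y$ (where it is induced by a geometric involution, hence monodromy) and transporting via a marking, since $N$ and $Y$ are deformation equivalent. For the Kähler cone, I would compose $\Phi$ with an element of the Weyl group generated by reflections in prime exceptional or wall classes so that the resulting isometry sends the Kähler cone of $N$ to itself; this is possible because $\Phi$ has order two and the candidates for walls in the co-invariant $D_k(2)$ are controlled by the discriminant. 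Torelli then produces an automorphism $\tilde\iota\in\mathrm{Aut}(N)$ with $\tilde\iota^*=\Phi$ (possibly after the above Weyl correction), and $\tilde\iota$ is symplectic since $\Phi$ is so.

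Finally, to show $\tilde\iota$ is standard in the sense of Definition \ref{standard_involution_on_orbifold}, I would use the unobstructed deformation theory of the pair $(N,\tilde\iota)$. The deformations of $(N,\tilde\iota)$ inside the moduli space of marked Nikulin-type orbifolds are parametrized by the period domain of the invariant sublattice; since this invariant lattice is the same (up to isometry) as the one carried by the model pair $(Y,\iota)$ coming from a K3$^{[2]}$-type manifold with a symplectic action of a group of order four, the two pairs lie in the same connected component of the moduli space of lattice-polarized Nikulin-type orbifolds with an involution. Hence one can deform $(N,\tilde\iota)$ to the model pair $(Y,\iota)$, proving standardness.

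The main obstacle is the Kähler-cone/monodromy step: one must check that after composing with a suitable product of reflections, $\Phi$ still has the prescribed invariant and co-invariant lattices and the gluing prescribed by $(\star)$, so that Torelli returns an honest involution rather than an isometry of a different deformation class. The other steps are essentially bookkeeping with the lattice data already established in Lemma \ref{incollamenti_involuzioni_indotte} and the model computations of Propositions \ref{inducedbyZ4}--\ref{inducedbyKlein}.
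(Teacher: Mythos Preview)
Your overall strategy matches the paper's: define $\Phi=-\mathrm{id}$ on $\varphi(D_k(2))$ and $+\mathrm{id}$ on its complement, check it is a Hodge isometry (because $T(N)\subseteq\varphi(D_k(2))^\perp$), check it is monodromy by transporting from the model pair $(Y,\iota)$, and then apply Torelli \cite[Thm.~1.1]{MR1}. The final deformation-of-pairs paragraph you include to justify standardness is more explicit than what the paper writes, but it is the right idea and not objectionable.

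The substantive divergence is the K\"ahler-cone step, and here your proposal has a real gap. You suggest composing $\Phi$ with Weyl reflections in wall classes until the K\"ahler cone is preserved, and you yourself flag that after such a composition one must still have an involution with the prescribed $(\star)$-gluing and the same invariant/co-invariant lattices. That is not automatic: reflecting in a wall class lying outside $\varphi(D_k(2))$ will in general destroy the simple $(-\mathrm{id})\oplus(+\mathrm{id})$ form of $\Phi$, and you give no mechanism to guarantee the corrected isometry is still of order two with co-invariant lattice $D_k(2)$.

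The paper avoids this problem entirely by proving that no Weyl correction is needed: it checks directly, using the explicit list of wall divisors on Nikulin-type orbifolds from \cite{MR2}, that $\varphi(D_k(2))$ contains no wall classes. Concretely, $D_k(2)$ has no $(-2)$-vectors, and by writing out generators of $\varphi(D_6(2))$ in the standard basis of $E_8\oplus U(2)^{\oplus 3}$ one sees that none of its vectors of square $-4,-6,-12$ have divisibility $2$ in $\Lambda_{\mathrm N}$ (similarly for $D_4(2)$). Since every wall class of $NS(N)$ is of the form $aL+bv$ with $v\in\varphi(D_k(2))$ and $L$ the orthogonal polarization, $\Phi$ sends such a class to $aL-bv$, which is never its negative; hence $\Phi$ preserves the K\"ahler cone as it stands. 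This direct computation is what you are missing, and it is the cleanest way to close the argument.
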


\begin{proof}
We define the isometry $\alpha$ on $H^2(N,\mathbb Z)$ that acts as $-id$ on $\varphi(D_k(2))$, and as the identity on its orthogonal complement.
To apply the Torelli theorem for IHSOs \cite[Thm. 1.1]{MR1} we need to show that $\alpha$ is 1) an integral Hodge isometry which 2) is a monodromy operator and 3) preserves the K\"ahler cone. The first condition is satisfied by construction, because $T(N)$ is contained in the orthogonal complement to $\varphi(D_k(2))$, so $\alpha$ acts on it as the identity; the second is true by the assumption $N\in \mathcal M_0$. To prove 3), refer to the description of the walls of the K\"ahler cone of $N$ in \cite{MR2}. The lattice $\varphi(D_6(2))$ does not contain any wall-divisor: indeed it contains no $(-2)$-classes and, if we assume $E_8\oplus U(2)^{\oplus 3}=\langle e_1,\dots,e_8\rangle\oplus\langle u_{i,1},u_{i,2}\rangle_{i=1,2,3}$, we get as generators of $\varphi(D_6(2))$
\begin{align*}
d_1&=2e_1+2e_2+5e_3+7e_4+5e_5+4e_6+3e_7+e_8+u_{2,1}+2u_{2,2}+u_{3,1}-u_{3,2}, \\
d_2&=-12e_1-8e_2-17e_3-25e_4-21e_5-16e_6-11e_7-5e_8-u_{3,1}-5u_{3,2}, \\
d_3&=-6e_1-4e_2-8e_3-12e_4-9e_5-8e_6-6e_7-3e_8-u_{2,2}-u_{3,1}-u_{3,2}, \\
d_4&=3e_1+2e_2+3e_3+5e_4+4e_5+4e_6+3e_7+1e_8-u_{2,1}+2u_{3,2}, \\
d_5&=16e_1+11e_2+23e_3+33e_4+27e_5+22e_6+15e_7+8e_8+u_{2,1}+2u_{3,1}+5u_{3,2},\\
d_6&=e_1+e_5-u_{2,1},
\end{align*}
so no elements of square $-4,-6,-12$ in $\varphi (D_6(2))$ have divisibility 2; a similar result holds for $\varphi(D_4(2))$).\\
If $NS(N)=D_k(2)\oplus\langle 2d\rangle$ ($k=4,6, d\in\mathbb Z_{>0}$) or one of its overlattices, then wall classes in its Néron-Severi will have the form $aL+bv$, with $a,b\in\mathbb Q\setminus\{0\}$, $v\in D_k(2)$ and $L$ the generator of $D_k(2)^{\perp_{NS(N)}}$: since $\alpha$ does not reflect any of these classes, the K\"ahler cone of the general projective $N$ is preserved.
\end{proof}
\begin{remark}
To extend this result to the whole deformation class of Nikulin-type orbifolds, the knowledge of the monodromy group is needed. In \cite[Thm. 3.5]{BMM} it its proven that indeed Mon$^2(\Lambda_{\mathrm N}) = O^+ (\Lambda_{\mathrm N})$, so the result holds in full generality.
\end{remark}

\section{Classifying Nikulin-type Orbifolds with a standard symplectic involution}

In the first part of this section, we classify projective Nikulin orbifolds with an induced symplectic involution: these are obtained as quotients of projective K3$^{[2]}$-type manifolds with a symplectic action of a group of order four $G$, whose projective families are classified in Sections \ref{K3[2]withZ4}, \ref{K3[2]withKlein}.

We then classify projective Nikulin-type orbifolds $N$ with an action of $(\mathbb Z/2\mathbb Z)^2=\langle\iota,\kappa\rangle$, where $\iota$ is standard (see Definition \ref{standard_involution_on_orbifold}), and $\kappa$ is the non-standard involution defined in \cite{MR2} (see Theorem \ref{MR_involution}): therefore, $NS(N)$ admits a primitive embedding of either $D_6(2)\oplus\langle -2\rangle$, or $D_4(2)\oplus\langle -2\rangle$, with the additional property that the embedding $D_k(2)\hookrightarrow H^2(N,\mathbb Z)$ satisfies condition $(\star)$ (see Remark \ref{stellina}).

\subsection{Families of Nikulin orbifolds with an induced symplectic involution}

The general (non-projective) Nikulin orbifold with an induced involution has Néron-Severi lattice $\Omega_\iota\oplus\langle -4\rangle$, where $\Omega_\iota=D_6(2)$ if $\iota$ is induced by $G=\mathbb Z/4\mathbb Z$, $\Omega_\iota=D_4(2)$ if $\iota$ is induced by $G=(\mathbb Z/2\mathbb Z)^2$; the class that generates $\Omega_\iota^\perp$ is the exceptional class $\tilde\Sigma$ that is introduced with the terminalization $Y\rightarrow X/i$.

\begin{remark}\label{rem:split}
Recall that, since the symplectic action of groups $G$ of order 4 on a K3$^{[2]}$-type manifold $X$ is standard, it preserves the split
$H^2(X,\mathbb Z)\simeq \Lambda_{\mathrm{K3}}\oplus\langle -2\rangle,$
acting as the identity on $\mu$ (the generator of $\Lambda_{\mathrm{K3}}^\perp$), and as it acts on the cohomology of a K3 surface $S$ on $\Lambda_{\mathrm{K3}}$. Therefore, the map induced by $\pi: X\rightarrow X/i$ can be recovered from $\pi_S: S\rightarrow S/i$.
\end{remark}
{\sffamily The case $G=\mathbb Z/4\mathbb Z$}

In the following table we classify the projective families of Nikulin orbifolds $Y$ admitting a natural involution induced by an action of $\mathbb Z/4\mathbb Z$ on a K3$^{[2]}$-type manifold $X$, by giving the possible pairs $(NS(Y),T(Y))$ in the second and third column. Starting from the projective families of $X$ classified in Theorem \ref{familieshyperkahlerZ4}, we apply the map induced in cohomology by $\pi: X\rightarrow X/i$ as in Remark \ref{rem:split}, using
$\pi_{S*}=\pi_{2*}$ as described in \cite[\S 4.1]{P}.
If $X$ is polarized with a class $L$ of square $2d$, then on $Y$ we consider $H=\pi_*L$, or $H=\pi_*L/2$ if the former is not primitive: therefore, it holds $H^2=4d$ or $H^2=d$ respectively. The relation between the number $d$ in the first column and $m$ appearing in the last columns is given in the proof of Theorem \ref{familieshyperkahlerZ4}, where the classes $M_i(m)$ are constructed; since in many cases the same lattice $T(Y)$ obtained using $H=\pi_*M_i(m)$, therefore depending on $m$, is obtained also using $L_i(h)$, we write $L_i(m)$ accordingly (see \cite[Ex. 5.1.6]{P}). The relation between $m, j, h$ appearing in the last line of the table is explained in Table \ref{mjh2}.

\begin{remark}\label{SigmaM}
Notice that for the families with a polarization $L=L_S\in\mu^\perp$, 
i.e. those families such that $T(X)=T(S)\oplus\langle -2\rangle$ for some general projective K3 surface $S$ admitting a symplectic action of $G$, the class $\tilde\Sigma$ does not glue to any element in $NS(Y)$; for the families with a polarization $M=\lambda+n\mu$ instead, $\tilde\Sigma$ glues to $\pi_*M$.
\end{remark}
\newpage
\footnotesize
\begin{longtable}{|c|c|c|c|c|}\caption{Projective families of Nikulin orbifolds in the case $G=\mathbb Z/4\mathbb Z$}\label{NikOrbZ4}\\
\cline{2-5}
\nocell{1} &{$NS(Y)$\Tstrut} &{$T(Y)$ \Tstrut} & $H$ &$H^2$\\ [6pt]
\hline
\multirow{2}{*}{$d=_4 1$\Tstrut}	&{$D_6(2)\oplus\langle4d\rangle\oplus\langle-4\rangle$\Tstrut} & $\langle-4d\rangle\oplus\langle-4\rangle^{\oplus 3}\oplus U(2)^{\oplus 2}$\Tstrut &$\pi_*L_0(d)$ &$4d$\\
\ &$(D_6(2)\oplus\langle4d\rangle\oplus\langle-4\rangle)'$\Tstrut & $C_m\oplus U(2)^{\oplus 2}$ &$\pi_*\ptwiddle{M}_1(m)$ &$4(4m-3)$ \\[6pt]
\hline
\multirow{4}{*}{$d=_4 2$\Tstrut} & {$D_6(2)\oplus\langle4d\rangle\oplus\langle-4\rangle$\Tstrut} &$\langle-4d\rangle\oplus\langle-4\rangle^{\oplus 3}\oplus U(2)^{\oplus 2}$\Tstrut &$\pi_*L_0(d)$ &$4d$\\[6pt]
\cdashline{2-5}
\ &$(D_6(2)\oplus\langle4d\rangle\oplus\langle-4\rangle)'$\Tstrut &\multirow{2}{*}{$B_m\oplus\langle-4\rangle\oplus U(2)^{\oplus 2}$\Tstrut} &$\pi_*M_2(m)$ &$4(4m-2)$\\
\ &$(D_6(2)\oplus\langle4d\rangle)'\oplus\langle-4\rangle$\Tstrut &\ &$\pi_*L_{2,2}^{(1,2)}(m)$ &$4(4m\pm2)$ 
\\ [6pt]
\hline
\multirow{4}{*}{$d=_4 3$\Tstrut}&{$D_6(2)\oplus\langle4d\rangle\oplus\langle-4\rangle$\Tstrut} & $\langle-4d\rangle\oplus\langle-4\rangle^{\oplus 3}\oplus U(2)^{\oplus 2}$\Tstrut &$\pi_*L_0(d)$ &$4d$\\[6pt]
\cdashline{2-5}
\ &$D_6(2)\oplus(\langle4d\rangle\oplus\langle-4\rangle)'$\Tstrut &\multirow{2}{*}{$F_m\oplus\langle-4\rangle^{\oplus 2}\oplus U(2)^{\oplus 2}$\Tstrut} &$\pi_*M_3(m)$ &$4(4m-1)$\\
\ &$(D_6(2)\oplus\langle4d\rangle)'\oplus\langle-4\rangle$\Tstrut &\ &$\pi_*L_{2,3}(m)$ &$4(4m+3)$\\[6pt]
\hline
\multirow{5}{*}{$d=_4 0$\Tstrut} & $D_6(2)\oplus\langle4d\rangle\oplus\langle-4\rangle$\Tstrut &$\langle-4d\rangle\oplus\langle-4\rangle^{\oplus 3}\oplus U(2)^{\oplus 2}$\Tstrut  &$\pi_*L_0(d)$ &$4d$   \\[6pt]
\cdashline{2-5}
\ &$D_6(2)\oplus\langle d\rangle\oplus\langle-4\rangle$\Tstrut &\multirow{4}{*}{$G_m\oplus\langle-4\rangle^{\oplus 2}\oplus U(2)^{\oplus 2}$\Tstrut}  &$\pi_*L_{2,0}(m)/2$ &$4(m-1)$\\
\ &$(D_6(2)\oplus\langle d\rangle\oplus\langle-4\rangle)'$\Tstrut &\ &$\pi_*M_4(m)/2$ &$4(m-1)$\\
\ & $(D_6(2)\oplus\langle d\rangle)'\oplus\langle-4\rangle$\Tstrut  &\  &$\pi_*L_{4,j}(h)/2$ &$4(m-1)$, see Table \ref{mjh2}\\ [6pt]
\hline
\end{longtable}
\begin{table}[h!]
\caption{Relation between 
$m,j,h$}\label{mjh2}
\centering
\begin{tabular}{c|c c c c}
$m$ (mod 4) & 0 & 1 & 2 & 3 \\
\hline
$j$ & 12 & 0 & 4 & 8\\
$h$ & $(m-4)/4$ &$(m+3)/4$  &$(m-2)/4$ &$(m+13)/4$
\end{tabular}\end{table}
\scriptsize
\begin{gather*}
B_m=\left[\begin{array}{r r r}-4m &2 &2\\ 2 &-4 &0\\2 &0 &-4\end{array}\right]\quad C_m=\left[\begin{array}{r r r r}-4m &2 &2 &2\\ 2 &-4 &0 &0\\2 &0 &-4 &0\\ 2 &0 &0 &-4\end{array}\right]\quad F_m=\left[\begin{array}{r r}-4m &2 \\ 2 &-4 \end{array}\right]\quad G_m=\left[\begin{array}{r r}-4m &4 \\ 4 &-4 \end{array}\right]\end{gather*}
\normalsize

\begin{remark}
The projective families of K3$^{[2]}$-type manifolds associated to the polarizations $M_1$ and $\tilde M_1$ give Nikulin orbifolds that belong to the same projective family. A similar statement holds for $L_{2,2}^{(1)}$ and $L_{2,2}^{(2)}$. These are the only families for which this phenomenon happens.
\end{remark}

{\sffamily The case $G=(\mathbb Z/2\mathbb Z)^2$}

In the following table we classify the projective families of Nikulin orbifolds $Y$ admitting a natural involution induced by an action of $(\mathbb Z/2\mathbb Z)^2=\langle\tau,\varphi\rangle$ on a K3$^{[2]}$-type manifold $X$, by giving the possible pairs $(NS(Y),T(Y))$ in the second and third column.

Starting from the projective families of $X$ classified in Theorem \ref{familieshyperkahlerKlein}, we apply the map induced in cohomology by $\pi: X\rightarrow X/i$ as in Remark \ref{rem:split}, with $\pi_{S*}$ equals to either $\pi_{\tau*}$ or $\pi_{\varphi*}$ \cite[\S2.1, \S2.2]{P2}: these two maps may act differently on the polarization of $X$, and therefore the same projective family of K3$^{[2]}$-type manifolds can give rise to more than one family of Nikulin orbifolds (it happens also for projective K3 surfaces, see \cite[Thm. 3.3.4]{P2}). When there is no difference between $\pi_{\tau*}$ and $\pi_{\varphi*}$, we use the notation $\pi_{\iota*}$.
On $Y$ we consider $H=\pi_*L$, or $H=\pi_*L/2$ if the former is not primitive: therefore, it holds $H^2=4d$ or $H^2=d$ respectively. The same considerations as in Remark \ref{SigmaM} can be applied here.

\begin{remark} 
The classes $\pi_{\iota*}L_0(e)$ and $\pi_{\iota*}L_{2,0}^{(2)}(e+1)/2$ give the same projective family of Nikulin orbifolds; in other words, the projective families of K3$^{[2]}$-type manifolds associated to the polarizations $L_0(e)$ and $L_{2,0}^{(2)}(e+1)$ give Nikulin orbifolds that belong to the same projective family, independently of the involution in $(\mathbb Z/2\mathbb Z)^2$ we choose when taking the quotient. These are the only families for which this phenomenon happens.
\end{remark}

\footnotesize
\begin{longtable}{|c|c|c|c|c|}\caption{Projective families of Nikulin orbifolds in the case $G=(\mathbb Z/2\mathbb Z)^2$}\label{NikOrbKlein}\\
\cline{2-5}
\nocell{1} &{$NS(Y)$\Tstrut} &{$T(Y)$ \Tstrut} & $H$ &$H^2$\\ [6pt]
\hline
{$d=_2 1$\Tstrut}	&{$D_4(2)\oplus\langle 2d\rangle\oplus\langle-4\rangle$\Tstrut} & $U\oplus\langle-4\rangle\oplus D_4(2)\oplus Q_h$\Tstrut &$\pi_{\tau*}L_{2,2}^{(b)}(h)/2$ &$2(2h+1)$\\[6pt]
\hline
\multirow{5}{*}{$d=_{4} 0$\Tstrut}	&\multirow{2}{*}{$D_4(2)\oplus\langle 4d\rangle\oplus\langle-4\rangle$\Tstrut} 
& $U\oplus\langle-4\rangle\oplus T_h$\Tstrut &$\pi_{\varphi*}L_{2,0}^{(1)}(h)/2$ &$4h$ \\
\ &\ & $\langle-4d\rangle\oplus U^2\oplus D_4(2)\oplus\langle-4\rangle$\Tstrut &$\pi_{\iota*}L_0(d)$ &$4d$\\[6pt]
\cdashline{2-5}
\ &{$(D_4(2)\oplus\langle 4d\rangle\oplus\langle-4\rangle)'$\Tstrut} & $\langle-4d\rangle\oplus U^2\oplus D_4(2)\oplus\langle-4\rangle$\Tstrut &$\pi_{\iota*}M_8(m)/2$ &$8(m-1)$\\
\ &{$(D_4(2)\oplus\langle 4d\rangle)'\oplus\langle-4\rangle$\Tstrut} & $U\oplus\langle-4\rangle\oplus\langle2\rangle\oplus P_h$\Tstrut &$\pi_{\tau*}L_{2,0}^{(1)}(h)$ &$16h$\\[6pt]
\hline
\multirow{6}{*}{$d=_{4} 1$\Tstrut}	&\multirow{2}{*}{$D_4(2)\oplus\langle 4d\rangle\oplus\langle-4\rangle$\Tstrut} 
& $U\oplus\langle-4\rangle\oplus T_h$\Tstrut &$\pi_{\varphi*}L_{2,0}^{(1)}(h)/2$ &$4h$ \\
\ &\ & $\langle-4d\rangle\oplus U^2\oplus D_4(2)\oplus\langle-4\rangle$\Tstrut &$\pi_{\iota*}L_0(d)$ &$4d$\\[6pt]
\cdashline{2-5}
\ &$(D_4(2)\oplus\langle 4d\rangle)'\oplus\langle-4\rangle$\Tstrut& $\langle-4d\rangle\oplus U^2\oplus D_4(2)\oplus\langle-4\rangle$\Tstrut &$\pi_{\iota*}L_{4,4}(h)/2$ &$16h+4$\\
\ &$(D_4(2)\oplus\langle 4d\rangle\oplus\langle-4\rangle)'$\Tstrut& $U^2\oplus R_m$\Tstrut &$\pi_{\tau*}M_1(m)$ &$4(4m-3)$\\
\ &$D_4(2)\oplus(\langle 4d\rangle\oplus\langle-4\rangle)'$\Tstrut& $B\oplus F_m$\Tstrut &$\pi_{\varphi*}M_1(m)$ &$4(4m-3)$\\[6pt]
\hline
\multirow{7}{*}{$d=_{4} 2$\Tstrut} &\multirow{2}{*}{$D_4(2)\oplus\langle 4d\rangle\oplus\langle-4\rangle$\Tstrut} 
& $U\oplus\langle-4\rangle\oplus T_h$\Tstrut &$\pi_{\varphi*}L_{2,0}^{(1)}(h)/2$ &$4h$ \\
\ &\ & $\langle-4d\rangle\oplus U^2\oplus D_4(2)\oplus\langle-4\rangle$\Tstrut &$\pi_{\iota*}L_0(d)$ &$4d$\\[6pt]
\cdashline{2-5}
\ &{$(D_4(2)\oplus\langle 4d\rangle\oplus\langle-4\rangle)'$\Tstrut} &$\langle-4d\rangle\oplus U^2\oplus D_4(2)\oplus\langle-4\rangle$\Tstrut &$\pi_{\iota*}M_8(m)/2$ &$8(m-1)$\\[6pt]
\cdashline{2-5}
\ &\multirow{2}{*}{$(D_4(2)\oplus\langle 4d\rangle)'\oplus\langle-4\rangle$\Tstrut} &{$U\oplus\langle-4\rangle\oplus W_h$\Tstrut} &$\pi_{\iota*}L_{2,2}^{(a)}(h)$ &$16h+8$ \\ 
\ &\ &$D_4\oplus\langle-4\rangle\oplus V_h$\Tstrut   &$\pi_{\varphi*}L_{2,2}^{(b)}(h)$ &$16h+8$\\ [6pt]
\hline
\multirow{6}{*}{$d=_{4} 3$\Tstrut} &\multirow{2}{*}{$D_4(2)\oplus\langle 4d\rangle\oplus\langle-4\rangle$\Tstrut} 
& $U\oplus\langle-4\rangle\oplus T_h$\Tstrut &$\pi_{\varphi*}L_{2,0}^{(1)}(h)/2$ &$4h$ \\
\ &\ & $\langle-4d\rangle\oplus U^2\oplus D_4(2)\oplus\langle-4\rangle$\Tstrut &$\pi_{\iota*}L_0(d)$ &$4d$\\[6pt]
\cdashline{2-5}
\ &{$(D_4(2)\oplus\langle 4d\rangle)'\oplus\langle-4\rangle$\Tstrut} &$\langle-4d\rangle\oplus U^2\oplus D_4(2)\oplus\langle-4\rangle$ &$\pi_{\iota*}L_{4,- 4}(h)/2$ &$16h- 4$\Tstrut\\ [6pt]
\cdashline{2-5}
\ &{$D_4(2)\oplus(\langle 4d\rangle\oplus\langle-4\rangle)'$\Tstrut} &$U^2\oplus D_4(2)\oplus S_m$\TBstrut &$\pi_{\iota*}M_3(m)$ &$4(4m-1)$\\[6pt]
\hline
\end{longtable}
\begin{gather*}P_h=\left[\begin{array}{r r r r r r}-4-4h &2 &2 &0 &0 &0\\
     2 &-2 &0 &0 &0 &0\\
     2 &0 &-4 &0 &2 &-2\\
     0 &0 &0 &-4 &2 &2\\
     0 &0 &2 &2 &-4 &0\\
     0 &0 &-2 &2 &0 &-4\end{array}\right]\quad Q_h=\left[\begin{array}{r r r} -4h &2 &0\\ 2 &0 &2\\ 0 &2 &-2\end{array}\right]\quad S_m=\left[\begin{array}{r r} -4m &2 \\2 &-4\end{array}\right]
\\ R_m=\left[\begin{array}{r r r r r r}-4m &2 &2 &2 &-2 &0\\
     2 &-4 &0 &0 &2 &0\\
     2 &0 &-4 &0 &2 &0\\
     2 &0 &0 &-4 &0 &0\\
     -2 &2 &2 &0 &-4 &2\\
     0 &0 &0 &0 &2 &-4\end{array}\right]\quad T_h=\left[\begin{array}{r r r r r r r}-4-4h &2 &0 &0 &0 &0 &0\\
     2 &0 &0 &2 &2 &0 &0\\
     0 &0 &-8 &0 &4 &0 &0\\
     0 &2 &0 &0 &4 &-2 &0\\
     0 &2 &4 &4 &0 &2 &0\\
     0 &0 &0 &-2 &2 &-4 &2\\
     0 &0 &0 &0 &0 &2 &-4\end{array}\right]\\
V_h=\left[\begin{array}{r r r r r}-4h &2 &0 &0 &0\\
     2 &8 &4 &0 &0\\
     0 &4 &2 &1 &0\\
     0 &0 &1 &-2 &1\\
     0 &0 &0 &1 &-2
\end{array}\right] \quad W_h=\left[\begin{array}{r r r r r r r}-4-4h &2 &0 &0 &0 &0 &0\\
     2 &0 &-2 &0 &0 &0 &0\\
     0 &-2 &0 &0 &2 &0 &0\\
     0 &0 &0 &-4 &0 &2 &2\\
     0 &0 &2 &0 &-4 &2 &-2\\
     0 &0 &0 &2 &2 &-4 &0\\
     0 &0 &0 &2 &-2 &0 &-4\end{array}\right]\\
F_m=\left[\begin{array}{r r r r r r}-4m &2 &0 &0 &0 &0\\
     2 &-8 &4 &0 &-2 &2\\
     0 &4 &-4 &2 &4 &0\\
     0 &0 &2 &-4 &-4 &0\\
     0 &-2 &4 &-4 &-6 &0\\
     0 &2 &0 &0 &0 &-6\end{array}\right]
\end{gather*}
\normalsize

\subsection{Nikulin-type orbifolds with a symplectic action of $(\mathbb Z/2\mathbb Z)^2$}

We now describe Nikulin-type orbifolds with a particular symplectic action of $(\mathbb Z/2\mathbb Z)^2$, generated by a standard involution, and the non-standard involution described in \cite{MR2} which consists in the reflection on a class of square $-2$ and divisibility 2.

\begin{theorem}[\protect{\cite[Thm. 8.5]{MR2}}]\label{MR_involution}
Let $Y$ be a Nikulin-type orbifold such that there exists $D\in NS(Y)$ with $D^2 = -2$ and $div(D) = 2$. Then there exists an irreducible symplectic orbifold $Z$
bimeromophic to $Y$ and a non-standard symplectic involution $\kappa$ on $Z$ such that:
\[H^2(Z,\mathbb Z)^\kappa\simeq U(2)^{\oplus 3}\oplus E_8\oplus\langle -2\rangle, \quad \Omega_\kappa\coloneqq {(H^2(Z, \mathbb Z)^{\kappa})}^\perp\simeq\langle -2\rangle.\]
\end{theorem}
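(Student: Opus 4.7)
The plan is to produce $\kappa$ as the geometric realization of the reflection along $D$, then invoke the Torelli theorem for primitive symplectic orbifolds. First I would observe that the condition $\mathrm{div}(D)=2$ together with $D^2=-2$ implies that the formula $r_D(x)=x+(x,D)D$ defines an integral isometry of $H^2(Y,\mathbb Z)$ with co-invariant lattice $\langle D\rangle\simeq\langle -2\rangle$ and invariant lattice $D^\perp$. Because $D\in NS(Y)$, the transcendental lattice $T(Y)$ lies inside $D^\perp$ and so $r_D$ acts trivially on $H^{2,0}(Y)$; hence $r_D$ is a Hodge isometry which, if it lifts, will be symplectic.

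Next I would identify $D^\perp$ abstractly inside $\Lambda_{\mathrm N}=U(2)^{\oplus 3}\oplus E_8\oplus\langle -2\rangle^{\oplus 2}$. Using \cite[Prop.~1.15.1]{Nikulin1}, primitive embeddings $\langle -2\rangle\hookrightarrow\Lambda_{\mathrm N}$ with the generator of divisibility $2$ are unique up to isometries of $\Lambda_{\mathrm N}$: the image of $D/2$ in the discriminant group $\Lambda_{\mathrm N}^\ast/\Lambda_{\mathrm N}\simeq(\mathbb Z/2\mathbb Z)^8$ is constrained by $(D/2)^2\equiv -1/2\pmod{2\mathbb Z}$ to lie in the block coming from the $\langle -2\rangle^{\oplus 2}$ summand. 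The orthogonal complement then computes as $U(2)^{\oplus 3}\oplus E_8\oplus\langle -2\rangle$, matching exactly the claimed description of $H^2(Z,\mathbb Z)^\kappa$.

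The main obstacle is passing from the lattice-level isometry $r_D$ to an actual automorphism, which forces passing to a birational model. I would apply the Torelli theorem for Nikulin-type orbifolds \cite[Thm.~1.1]{MR1}: one needs $r_D$ to be a monodromy operator and to preserve the K\"ahler cone of some IHSO $Z$ bimeromorphic to $Y$. Monodromy should follow from the classification of wall-divisors in \cite{MR2}, since reflections in primitive $(-2)$-classes of divisibility $2$ are generated by wall reflections and therefore lie in the monodromy group. The K\"ahler cone condition is the delicate point because on $Y$ itself the class $D$ may be a wall-divisor, so $r_D$ would swap two adjacent chambers; the cure is to pick as $Z$ the birational model corresponding to one of these chambers, so that after the wall-crossing $D$ is no longer a wall but an effective class whose reflection preserves the positive cone inside $NS(Z)$. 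Standard Markman/Mongardi-type arguments, adapted by Menet--Riess in \cite{MR2} to IHSOs, then provide such a $Z$, and the Hodge isometry $r_D$ lifts to a biregular symplectic involution $\kappa$ of $Z$.

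Finally, I would verify that $\kappa$ is non-standard, which is essentially automatic from the invariant and co-invariant lattices. Comparing with Corollaries \ref{cor:inducedbyZ4} and \ref{cor:inducedbyKlein}, standard induced involutions have co-invariant lattices $D_6(2)$ or $D_4(2)$ of rank $6$ or $4$, whereas $\Omega_\kappa\simeq\langle -2\rangle$ has rank $1$; no deformation can change lattice invariants, so $\kappa$ cannot coincide with any standard involution in the sense of Definition \ref{standard_involution_on_orbifold}. The hardest step is therefore controlling the K\"ahler chamber structure and ensuring that after one round of wall-crossing the reflection $r_D$ preserves the K\"ahler cone; everything else is routine lattice arithmetic and an application of the orbifold Torelli theorem.
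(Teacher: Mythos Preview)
The paper does not prove this theorem: it is quoted verbatim from \cite[Thm.~8.5]{MR2} and used as a black box for the subsequent classification of Nikulin-type orbifolds with a $(\mathbb Z/2\mathbb Z)^2$-action. There is therefore no argument in the paper to compare your sketch against.

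That said, your outline is a reasonable reconstruction of how the proof in \cite{MR2} goes: Menet--Riess realise $\kappa$ as the lift of the reflection in a $(-2)$-class of divisibility $2$, using their wall-divisor description and the orbifold Torelli theorem. Two small corrections to your write-up: first, integrality of $r_D(x)=x+(x,D)D$ follows from $D^2=-2$ alone; the hypothesis $\mathrm{div}(D)=2$ is not needed there but rather for the monodromy and uniqueness steps. Second, your argument that $D/2$ is forced into the $\langle-2\rangle^{\oplus 2}$ block of the discriminant group is not quite complete: the $U(2)^{\oplus 3}$ part of $A_{\Lambda_{\mathrm N}}$ contains isotropic elements, so classes of square $3/2$ can have nontrivial $U(2)$-component as well, and one must check that $O(\Lambda_{\mathrm N})$ acts transitively on all such classes to conclude uniqueness of the embedding. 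These are minor technical points; the overall strategy is sound.
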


\begin{remark}
If $Y$ is a Nikulin orbifold obtained as terminalization of a natural pair $(S^{[2]},i)$, assuming that $S$ is very general (i.e. $NS(S)=\Omega_i=E_8(2)$), then $NS(Y)=\langle-2\rangle^2$: therefore $\kappa$ exists on $Y$, and it acts exchanging the exceptional classes $\tilde\Sigma$ and $\pi_*\delta$ \cite[Prop. 4.5]{MR2}.
\end{remark}

\begin{corollary}
Let $N$ be a Nikulin-type orbifold that admits a symplectic action of $(\mathbb Z/2\mathbb Z)^2=\langle\iota,\kappa\rangle$, where $\iota$ is standard and $\kappa$ is the non-standard involution described in Theorem \ref{MR_involution}: then the following conditions hold: 
\begin{enumerate}
\item there exists a primitive embedding of $D_6(2)\oplus\langle-2\rangle$ or $D_4(2)\oplus\langle-2\rangle$ in $NS(N)$;
\item the resulting embedding $D_k(2)\hookrightarrow NS(N)\hookrightarrow H^2(N,\mathbb Z)$ satisfies condition ($\star$) (see Remark \ref{stellina}).
\end{enumerate}
\end{corollary}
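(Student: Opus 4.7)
The plan is to prove the equivalence by treating its two implications separately, exploiting the explicit classifications of standard and non-standard involutions already established in the paper.

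For the necessity direction I would assume $N$ carries the action $\langle\iota,\kappa\rangle$ and read off the lattice invariants of each involution. By Definition \ref{standard_involution_on_orbifold} together with Corollaries \ref{cor:inducedbyZ4}, \ref{cor:inducedbyKlein} and Lemma \ref{incollamenti_involuzioni_indotte}, the co-invariant lattice $\Omega_\iota$ is isometric to $D_k(2)$ for some $k\in\{4,6\}$, and its embedding in $H^2(N,\mathbb{Z})$ satisfies condition $(\star)$; by Theorem \ref{MR_involution}, $\Omega_\kappa\simeq\langle-2\rangle$ and is generated by a class $D\in NS(N)$ of square $-2$ and divisibility $2$. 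Because $\iota$ and $\kappa$ are symplectic, standard Hodge-theoretic considerations give $\Omega_\iota,\Omega_\kappa\subseteq NS(N)$. Commutativity implies that $\iota^*$ stabilises the rank-one lattice $\Omega_\kappa$ and acts there as $\pm\mathrm{id}$: the $-\mathrm{id}$ case would force $\Omega_\kappa\subseteq\Omega_\iota$, but $D_k(2)$ contains no $(-2)$-class, so we must have $\iota^*|_{\Omega_\kappa}=+\mathrm{id}$, i.e.\ $\Omega_\iota\perp\Omega_\kappa$. The orthogonal sum $D_k(2)\oplus\langle-2\rangle$ therefore embeds primitively in $NS(N)$, with condition $(\star)$ inherited from the $\iota$ factor.

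For the sufficiency direction I would construct $\iota$ and $\kappa$ from the lattice data and verify that they commute. Theorem \ref{existence_of_standard_involution_on_Y} applied to the embedded $D_k(2)$ with condition $(\star)$ produces a standard involution $\iota$ of $N$ acting as $-\mathrm{id}$ on $D_k(2)$ and as $+\mathrm{id}$ on its orthogonal complement; Theorem \ref{MR_involution} applied to the generator of $\langle-2\rangle$ produces a non-standard involution $\kappa$ on a bimeromorphic model $Z$, acting as $-\mathrm{id}$ on $\langle-2\rangle$ and as $+\mathrm{id}$ on its orthogonal complement. Because of the orthogonal decomposition
\[
H^2(N,\mathbb{Z}) \;=\; D_k(2)\,\oplus\,\langle-2\rangle\,\oplus\,\bigl(D_k(2)\oplus\langle-2\rangle\bigr)^{\perp},
\]
on which $\iota^*$ and $\kappa^*$ act by commuting scalars on each summand, the commutator $[\iota^*,\kappa^*]$ vanishes, giving a cohomological $(\mathbb{Z}/2\mathbb{Z})^2$-action whose third non-trivial element $(\iota\kappa)^*$ acts as $-\mathrm{id}$ on $D_k(2)\oplus\langle-2\rangle$ and as $+\mathrm{id}$ on its complement.

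The step I expect to be the main obstacle is promoting this cohomological action to a genuinely geometric one on a single orbifold, since a priori $\iota$ lives on $N$ while Theorem \ref{MR_involution} only produces $\kappa$ on some bimeromorphic model $Z$. I would resolve this by reapplying the Torelli theorem for IHSOs (\cite[Thm. 1.1]{MR1}) exactly as in the proof of Theorem \ref{existence_of_standard_involution_on_Y}, simultaneously to each of the three non-trivial involutions: each is a Hodge isometry which is a monodromy operator (being a deformation of a geometric involution), so the only remaining point is to exhibit a common K\"ahler chamber preserved by all three. For this I would combine the wall-divisor description of the K\"ahler cone in \cite{MR2} with the argument in the proof of Theorem \ref{existence_of_standard_involution_on_Y}: the $D_k(2)$-summand contains no wall-divisor, the $\langle-2\rangle$-summand contains a unique wall $D$ which is reflected by $\kappa$ itself, and orthogonality of the two summands propagates these properties to $D_k(2)\oplus\langle-2\rangle$, so that the cohomological action descends to commuting symplectic automorphisms on a single Nikulin-type orbifold.
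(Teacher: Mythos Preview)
Your necessity argument is correct and more carefully spelled out than the paper's, which treats that direction only in passing; the observation that $D_k(2)$ contains no $(-2)$-class, ruling out $\Omega_\kappa\subset\Omega_\iota$, is exactly right.

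In the sufficiency direction the strategy is sound until the last paragraph, where there is a genuine problem with the phrase ``exhibit a common K\"ahler chamber preserved by all three''. The isometry $\kappa^*$ is the reflection in the hyperplane $D^\perp$, and you yourself note that $D$ is a wall: a reflection in a wall of a chamber sends that chamber to the adjacent one, never to itself. This is precisely why Theorem~\ref{MR_involution} only furnishes $\kappa$ on a bimeromorphic model $Z$, whose K\"ahler cone is a different chamber. The remedy is to reverse the order of construction: first pass to $Z$, then build $\iota$ on $Z$ via Theorem~\ref{existence_of_standard_involution_on_Y}. This succeeds because $D_k(2)$ contains no wall divisors at all, so $\iota^*$ preserves \emph{every} chamber, in particular the K\"ahler cone of $Z$; commutation then follows from your cohomological computation together with Torelli.

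The paper's route is shorter and sidesteps this by exploiting the explicit shape of the invariant lattice: by Corollaries~\ref{cor:inducedbyZ4} and~\ref{cor:inducedbyKlein} the lattice $H^2(N,\mathbb Z)^\iota$ contains an orthogonal summand $\langle-2\rangle^{\oplus 2}$ whose generators have divisibility~$2$ in $\Lambda_{\mathrm N}$. The paper simply places $\Omega_\kappa$ as one of these summands, which at once guarantees the hypothesis of Theorem~\ref{MR_involution}, forces $\Omega_\kappa\subset (H^2)^\iota$ so that commutation is automatic, and (via the claim that all divisibility-$2$ embeddings of $\langle-2\rangle$ are equivalent) shows this is the only possible configuration. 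A secondary point your argument does not address is exactly this divisibility condition: a $(-2)$-class in $\Lambda_{\mathrm N}$ can have divisibility~$1$ (e.g.\ a root of the $E_8$ summand), and Theorem~\ref{MR_involution} does not apply to such a class, so you should verify that the $\langle-2\rangle$ in condition~(1) can always be taken with divisibility~$2$.
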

\begin{proof}
The co-invariant lattice $\Omega_\kappa$ of the non-standard involution $\kappa$ can always be embedded in the invariant lattice of the standard involutions in a way compatible with Lemma \ref{incollamenti_involuzioni_indotte}: use one of the orthogonal $\langle-2\rangle$ components. With this choice, $\kappa$ commutes with the standard involution on $N$: therefore, we get a symplectic action of $(\mathbb Z/2\mathbb Z)^2$ on $N$. Moreover, this is the only valid choice: indeed, $\kappa$ exists if and only its co-invariant lattice is embedded with divisibility 2 through $\langle-2\rangle\hookrightarrow NS(N)\hookrightarrow H^2(N,\mathbb Z)$, and all embeddings $\varphi: \langle-2\rangle\hookrightarrow H^2(N,\mathbb Z)$ with divisibility 2 are equivalent; if $\iota$ is either standard involution, $\Omega_\iota$ is embedded in $\varphi(\langle-2\rangle)^\perp$ such that Lemma \ref{incollamenti_involuzioni_indotte} is satisfied.
\end{proof}

\begin{theorem}\label{mixedNikZ4}
Let $N$ be a general projective Nikulin-type orbifold with a symplectic action of $\mathfrak K=\langle\iota,\kappa\rangle$, where $\kappa$ is the non-standard involution described in Theorem \ref{MR_involution}
and $(N,\iota)$ is a deformation of the natural pair $(Y,\tilde\iota)$, where $ Y$ is the terminalization of $X/i$, $X$ is a K3$^{[2]}$-type manifold with a symplectic action of $G=\mathbb Z/4\mathbb Z$ and $i\in G$ of order 2. Then $N$ belongs to one of the projective families described in the following table. 
\end{theorem}
\footnotesize
\begin{longtable}{|c|c|c|c|}
\hline
{$NS(N)$\Tstrut} & $e$ &{$T(N)$ \Tstrut} &$k$ \\ [6pt]
\hline
\multirow{16}{*}{$D_6(2)\oplus\langle2e\rangle\oplus\langle-2\rangle$\Tstrut} &$e=_2 0$ & $\langle-2k\rangle\oplus U(2)^{\oplus 2}\oplus\langle-4\rangle^{\oplus 2}\oplus\langle-2\rangle$\TBstrut &$e$  \\
\cline{2-4}
\  &\multirow{2}{*}{$e=_8 1$}& $G_k\oplus U(2)^{\oplus 2}\oplus\langle-4\rangle^{\oplus 2} $ &$(e+1)/2$\Tstrut  \\[6pt]
\ &\ & $R'_k\oplus\langle4\rangle$ &$(e-1)/8$  \\[6pt]
\cline{2-4}
\  &\multirow{4}{*}{$e=_8 3$}& $G_k\oplus U(2)^{\oplus 2}\oplus\langle-4\rangle^{\oplus 2} $ &$(e+1)/2$\Tstrut \\[6pt]
\ &\ & $M'_k\oplus U(2) $ &$(e+5)/8$   \\[6pt]
\ &\ & $Q'_k\oplus\langle-4\rangle\oplus\langle4\rangle $ &$ (e+29)/8$  \\[6pt]
\cline{2-4}
\  &\multirow{4}{*}{$e=_8 5$}& $G_k\oplus U(2)^{\oplus 2}\oplus\langle-4\rangle^{\oplus 2} $ &$(e+1)/2$\Tstrut  \\[6pt]
\ &\ & $N'_k\oplus\langle-4\rangle $ &$(e-5)/8$   \\[6pt]
\ &\ & $P'_k\oplus U(2)\oplus\langle-4\rangle $ &$(e-5)/8$   \\[6pt]
\cline{2-4}
\  &\multirow{2}{*}{$e=_8 7$}& $G_k\oplus U(2)^{\oplus 2}\oplus\langle-4\rangle^{\oplus 2} $ &$(e+1)/2$\Tstrut  \\[6pt]
\ &\ & $S'_k\oplus U(2)\oplus\langle-4\rangle $ &$ (e+9)/8$   \\[6pt]
\hline
\multirow{6}{*}{$(D_6(2)\oplus\langle4e\rangle)'^{(1)}\oplus\langle-2\rangle$\Tstrut} &$e=_4 0$ &$S_k\oplus U(2)\oplus\langle-4\rangle\oplus\langle-2\rangle $ &$e/4+1$\Tstrut  \\
\ &$e=_4 1$ &$R_k\oplus\langle4\rangle\oplus\langle-2\rangle $ &$ (e-1)/4$\Tstrut  \\
\ &$e=_4 2$ &$Q_k\oplus\langle-4\rangle\oplus\langle4\rangle\oplus\langle-2\rangle $ &$(e+14)/4$\Tstrut  \\
\ &$e=_4 3$ &$P_k\oplus U(2)\oplus\langle-4\rangle\oplus\langle-2\rangle $ &$ (e-3)/4$\Tstrut \\[6pt]
\hline
\multirow{4}{*}{$(D_6(2)\oplus\langle4e\rangle)'^{(2)}\oplus\langle-2\rangle$\Tstrut} &$e=_4 0$ &$M''_k\oplus U(2) $ &$ e/4+1$\Tstrut  \\
\ &$e=_4 1$ &$N''_k\oplus\langle-4\rangle $ &$(e-1)/4$\Tstrut \\
\ &$e=_4 2$ &$M_k\oplus U(2)\oplus\langle -2\rangle $ &$ (e+2)/4$\Tstrut  \\
\ &$e=_4 3$ &$N_k\oplus\langle-4\rangle\oplus\langle-2\rangle $ &$(e-3)/4$\Tstrut  \\[6pt]
\hline
\end{longtable}
\begin{gather*}G_k=\left[\begin{array}{r r}-4k &2\\ 2&-2\end{array}\right]\quad  S_k=\left[\begin{array}{r r r r}-4k &2 &0 &4\\
     2 &0 &4 &0\\
     0 &4 &0 &0\\
     4 &0 &0 &-4\end{array}\right]\quad S'_k = \left[\begin{array}{ c | c }
    -8 & \begin{matrix} 2 & 0 &0 &0\end{matrix}\\
    \hline
    \begin{matrix} 2 \\ 0\\ 0 \\ 0\end{matrix} & S_k
  \end{array}\right]\\
R_k=\left[\begin{array}{r r r r r r}-4k+4 &2 &0 &0 &2 &0 \\
     2 &0 &0 &0 &2 &0 \\
     0 &0 &-4 &0 &2 &0 \\
     0 &0 &0 &-4 &2 &0 \\
     2 &2 &2 &2 &-4 &2 \\
     0 &0 &0 &0 &2 &-4 \end{array}\right] \quad R'_k = \left[\begin{array}{ c | c }
    -8 & \begin{matrix} 2 & 0 &\dots &0\end{matrix}\\
    \hline
    \begin{matrix} 2 \\ 0\\ \vdots \\ 0\end{matrix} & R_k
  \end{array}\right]\\
Q_k=\left[\begin{array}{r r r r r}8-4k &2 &2 &2 &-2\\
     2 &-4 &0 &0 &0\\
     2 &0 &-4 &0 &0\\
     2 &0 &0 &0 &2\\
     -2 &0 &0 &2 &0\end{array}\right]\ Q'_k = \left[\begin{array}{ c | c }
    -8 & \begin{matrix} 2 & 0 &\dots &0\end{matrix}\\
    \hline
    \begin{matrix} 2 \\ 0\\ \vdots \\ 0\end{matrix} & Q_k
  \end{array}\right]\\ P_k=\left[\begin{array}{r r r r}-4-4k &2 &2 &0\\
     2 &-4 &0 &0\\
     2 &0 &0 &4\\
     0 &0 &4 &0\end{array}\right] \ P'_k = \left[\begin{array}{ c | c }
    -8 & \begin{matrix} 2 & 0 &0 &0\end{matrix}\\
    \hline
    \begin{matrix} 2 \\ 0\\ 0 \\ 0\end{matrix} & P_k
  \end{array}\right]\\
M_k=\left[\begin{array}{r r r r r}8-4k &2 &2 &0 &0\\
     2 &-4 &0 &0 &0\\
     2 &0 &0 &0 &2\\
     0 &0 &0 &-4 &2\\
     0 &0 &2 &2 &-8\end{array}\right]\ 
M'_k = \left[\begin{array}{ c | c }
    -8 & \begin{matrix} 2 & 0 &\dots &0\end{matrix}\\
    \hline
    \begin{matrix} 2 \\ 0\\ \vdots \\ 0\end{matrix} & M_k
  \end{array}\right]\ M''_k = \left[\begin{array}{ c | c }
    -2 & \begin{matrix} 2 & 0 &\dots &0\end{matrix}\\
    \hline
    \begin{matrix} 2 \\ 0\\ \vdots \\ 0\end{matrix} & M_k
  \end{array}\right]\\
N_k=\left[\begin{array}{r r r r r r}-4k &2 &2 &2 &2 &0\\
     2 &-4 &0 &0 &0 &0\\
     2 &0 &0 &2 &0 &0\\
     2 &0 &2 &0 &0 &0\\
     2 &0 &0 &0 &0 &-2\\
     0 &0 &0 &0 &-2 &0\end{array}\right]\ N'_k = \left[\begin{array}{ c | c }
    -8 & \begin{matrix} 2 & 0 &\dots &0\end{matrix}\\
    \hline
    \begin{matrix} 2 \\ 0\\ \vdots \\ 0\end{matrix} & N_k
  \end{array}\right]\ N''_k = \left[\begin{array}{ c | c }
    -2 & \begin{matrix} 2 & 0 &\dots &0\end{matrix}\\
    \hline
    \begin{matrix} 2 \\ 0\\ \vdots \\ 0\end{matrix} & N_k
  \end{array}\right]\\
\end{gather*}
\normalsize
\begin{proof}
The embedding $\Omega_\kappa\hookrightarrow\Lambda_{\mathrm N}$ is always such that $\Omega_\kappa^\perp\simeq\pi_{S*}H^2(S,\mathbb Z)\oplus\langle-2\rangle$, where $S$ is a K3 surface with a symplectic involution $i$, and $\pi_S:S\rightarrow S/i$.  Call $\alpha$ the generator of $(\pi_{S*}H^2(S,\mathbb Z)\oplus\Omega_\kappa)^\perp$, and embed $\Omega_\iota$ in $\Omega_\kappa^{\perp}$ such that the condition $(\star)$ is satisfied.
Denote $\Omega=\Omega_\iota\oplus\Omega_\kappa$. \\
Suppose now that $i=\tau^2\in \langle\tau\rangle=\mathbb Z/4\mathbb Z$, that acts symplectically on $S$. We remark that the gluings in Lemma \ref{incollamenti_involuzioni_indotte}.1 are exactly the image via $\pi_{S*}$ of the ones that define $H^2(S,\mathbb Z)$ as overlattice of finite index of $\Omega_4\oplus \Lambda_{\mathrm{K3}}^\tau$. Therefore, we can obtain all the projective families of $N$ using as generator of $\Omega^{\perp_{NS(N)}}$ an element of the form $\pi_{S*}L+n\alpha$, with $L$ an ample class on $S$ and $n=0,1,2$. 
This bound on $n$ is given by the condition $(\star)$, that allows overlattices of $\Omega\oplus\langle 2d\rangle$ of index at most 2: indeed, the classes of isomorphic overlattices of index 2 vary with the value of $d$ (mod 4).\\ If $n=0$ we find all the projective families with $NS(N)=\pi_{S*}NS(S)\oplus\langle-2\rangle,\ T(N)=\pi_{S*}T(S)\oplus\langle-2\rangle$; if $n=2$, since $(\pi_{S*}L+2\alpha)^2=(\pi_{S*}L)^2-8$, but $\pi_{S*}L+2\alpha$ glues to the same isometry class of $A_{\Omega}$ as $\pi_{S*}L$, we can find new projective families; lastly, if $n=1$ we find all the projective families with $NS(N)=\Omega\oplus\langle2e\rangle$ for $e$ odd.
\end{proof}

\begin{theorem}
Let $N$ be a general projective Nikulin-type orbifold with a symplectic action of $\mathfrak K=\langle\iota,\kappa\rangle$, where $\kappa$ is the non-standard involution described in Theorem \ref{MR_involution}
and $(N,\iota)$ is a deformation of the natural pair $(Y,\tilde\iota)$, where $Y$ is the terminalization of $X/i$, $X$ is a K3$^{[2]}$-type manifold with a symplectic action of $G=(\mathbb Z/2\mathbb Z)^2$ and $i\in G$ of order 2. Then $N$ belongs to one of the projective families described in the following table. 
\end{theorem}
\footnotesize
\begin{longtable}{|c|c|c|c|}
\hline
{$NS(Y)$\Tstrut} & $e$ &{$T(Y)$ \Tstrut} &$k$ \\ [6pt]
\hline
\multirow{18}{*}{$D_4(2)\oplus\langle2e\rangle\oplus\langle-2\rangle$\Tstrut} &\multirow{2}{*}{$e=_2 0$} & $\langle -4k\rangle\oplus\langle -2\rangle\oplus U^{\oplus 2}\oplus D_4(2)$\Tstrut &$e/2$  \\[6pt]
 &\ & $M''_k\oplus U\oplus D_4(2)$ &$e/2$ \\[6pt]
\cline{2-4}
\  &\multirow{4}{*}{$e=_8 1$}& $G_k\oplus U^{\oplus 2}\oplus D_4(2)$ &$(e+1)/2$ \Tstrut  \\[6pt]
\ &\ & $M_k\oplus\langle -2\rangle\oplus U\oplus D_4(2)$ &$(e-1)/2$  \\[6pt]
\ &\ & $R'_k\oplus U$ &$(e-1)/8$  \\[6pt]
\cline{2-4}
\  &\multirow{5}{*}{$e=_8 3$}& $G_k\oplus U^{\oplus 2}\oplus D_4(2)$ &$(e+1)/2$ \Tstrut  \\[6pt]
\ &\ & $M_k\oplus\langle -2\rangle\oplus U\oplus D_4(2)$ &$(e-1)/2$  \\[6pt]
\ &\ & $N'_k\oplus U\oplus U(2)$ &$(e-3)/8$  \\[6pt]
\ &\ & $S'_k\oplus D_4$ &$(e-3)/8$ \\[6pt]
\cline{2-4}
\  &\multirow{4}{*}{$e=_8 5$}& $G_k\oplus U^{\oplus 2}\oplus D_4(2)$ &$(e+1)/2$ \Tstrut  \\[6pt]
\ &\ & $M_k\oplus\langle -2\rangle\oplus U\oplus D_4(2)$ &$(e-1)/2$  \\[6pt]
\ &\ & $Q'_k\oplus U^{\oplus 2}$ &$(e+3)/8$  \\[6pt]
\cline{2-4}
\  &\multirow{2}{*}{$e=_8 7$}& $G_k\oplus U^{\oplus 2}\oplus D_4(2)$ &$(e+1)/2$ \Tstrut  \\[6pt]
\ &\ & $M_k\oplus\langle -2\rangle\oplus U\oplus D_4(2)$ &$(e-1)/2$ \\[6pt]
\hline
\multirow{8}{*}{$(D_4(2)\oplus\langle4e\rangle)'\oplus\langle-2\rangle$\Tstrut} &\multirow{2}{*}{$e=_4 0$} &$P_k\oplus\langle-2\rangle\oplus U$ &$e/4$ \Tstrut  \\[6pt]
\ &\ & $S''_k\oplus D_4$ &$e/4$  \\[6pt]
\cline{2-4}
\ &$e=_4 1$ &$R_k\oplus\langle-2\rangle\oplus U$ &$(e-1)/4$ \Tstrut  \\[6pt]
\cline{2-4}
\ &\multirow{2}{*}{$e=_4 2$} &$N_k\oplus\langle-2\rangle\oplus U\oplus U(2)$ &$(e-2)/4$ \Tstrut  \\[6pt]
\ &\ & $S_k\oplus\langle-2\rangle\oplus D_4$ &$(e-2)/4$  \\[6pt]
\cline{2-4}
\ &$e=_4 3$ &$Q_k\oplus\langle-2\rangle\oplus U^{\oplus 2}$ &$(e+1)/4$ \Tstrut \\[6pt]
\hline
\end{longtable}
\begin{gather*}
G_k=\left[\begin{array}{r r}-4k &2\\ 2&-2\end{array}\right]\quad M_k=\left[\begin{array}{r r r}-4k &2 &0\\
     2 &0 &2\\
     0 &2 &-2\end{array}\right]\quad M''_k = \left[\begin{array}{ c | c }
    -2 & \begin{matrix} 2 & 0 &0\end{matrix}\\
    \hline
    \begin{matrix} 2 \\ 0\\  0\end{matrix} & M_k
  \end{array}\right]\\
N_k=\left[\begin{array}{r r r r r}-4-4k &2 &0 &0 &0\\
     2 &-4 &0 &2 &-2\\
     0 &0 &-4 &2 &2\\
     0 &2 &2 &-4 &0\\
     0 &-2 &2 &0 &-4\end{array}\right]\ N'_k = \left[\begin{array}{ c | c }
    -8 & \begin{matrix} 2 & 0 &\dots &0\end{matrix}\\
    \hline
    \begin{matrix} 2 \\ 0\\ \vdots \\ 0\end{matrix} & N_k
  \end{array}\right]\\
P_k=\left[\begin{array}{r r r r r r r}-4-4k &2 &0 &0 &0 &0 &0\\
     2 &-2 &0 &0 &2 &0 &0\\
     0 &0 &2 &0 &2 &0 &0\\
     0 &0 &0 &-4 &0 &2 &2\\
     0 &2 &2 &0 &-4 &2 &-2\\
     0 &0 &0 &2 &2 &-4 &0\\
     0 &0 &0 &2 &-2 &0 &-4\end{array}\right]\\
Q_k=\left[\begin{array}{r r r r r}-4k &2 &0 &0 &0\\
     2 &-16 &4 &4 &4\\
     0 &4 &-4 &0 &0\\
     0 &4 &0 &-4 &0\\
     0 &4 &0 &0 &-4\end{array}\right]\ Q'_k = \left[\begin{array}{ c | c }
    -8 & \begin{matrix} 2 & 0 &\dots &0\end{matrix}\\
    \hline
    \begin{matrix} 2 \\ 0\\ \vdots \\ 0\end{matrix} & Q_k
  \end{array}\right]\\
R_k=\left[\begin{array}{r r r r r r r}-4k &2 &0 &0 &0 &0 &0\\
     2 &0 &2 &0 &0 &0 &0\\
     0 &2 &-2 &1 &0 &0 &0\\
     0 &0 &1 &-4 &2 &0 &0\\
     0 &0 &0 &2 &-4 &-4 &4\\
     0 &0 &0 &0 &-4 &-8 &4\\
     0 &0 &0 &0 &4 &4 &-8\end{array}\right]\ R'_k = \left[\begin{array}{ c | c }
    -8 & \begin{matrix} 2 & 0 &\dots &0\end{matrix}\\
    \hline
    \begin{matrix} 2 \\ 0\\ \vdots \\ 0\end{matrix} & R_k
  \end{array}\right]\\
S_k=\left[\begin{array}{r r r r r}-4k &2 &0 &0 &0\\
     2 &8 &4 &0 &0\\
     0 &4 &2 &1 &0\\
     0 &0 &1 &-2 &1\\
     0 &0 &0 &1 &-2\end{array}\right]\ S'_k = \left[\begin{array}{ c | c }
    -8 & \begin{matrix} 2 & 0 &\dots &0\end{matrix}\\
    \hline
    \begin{matrix} 2 \\ 0\\ \vdots \\ 0\end{matrix} & S_k
  \end{array}\right]\ S''_k = \left[\begin{array}{ c | c }
    -2 & \begin{matrix} 2 & 0 &\dots &0\end{matrix}\\
    \hline
    \begin{matrix} 2 \\ 0\\ \vdots \\ 0\end{matrix} & S_k
  \end{array}\right]
\end{gather*}
\normalsize
\begin{proof}
The proof is similar to that of Theorem \ref{mixedNikZ4}.
\end{proof}

\end{document}